\newcommand{\D}[1]{\mbox{\rm #1}} 
\newcommand{\dd}{\D{d}}
\numberwithin{equation}{section}
\theoremstyle{plain} 
\newtheorem{thm}{Theorem}[section]
\newtheorem{cor}{Corollary}[section]
\newtheorem{lem}{Lemma}[section]
\theoremstyle{defn}
\newtheorem{defn}{Definition}[section]
\theoremstyle{remark}
\newtheorem{rem}{Remark}[section]
\definecolor{ForestGreen}{RGB}{34,139,34}
\definecolor{ao(english)}{rgb}{0.0, 0.5, 0.0}
\begin{document}

\title[]{Relaxation and asymptotic expansion of controlled stiff differential equations}
\thanks{
}


\author{Michael Herty}
\address{Michael Herty \newline
\indent 
Institut f\"ur Geometrie und Praktische Mathematik\\RWTH Aachen University\\52062 Aachen\\Germany.
}
\email{\texttt{herty@igpm.rwth-aachen.de}}
\thanks{The authors thank the Deutsche Forschungsgemeinschaft (DFG, German Research Foundation) for the financial support through 320021702/GRK2326,  333849990/IRTG-2379, B04, B05 and B06 of 442047500/SFB1481, HE5386/19-3,23-1,25-1,26-1,27-1,30-1,  and  received funding from the European Union’s Horizon Europe research and innovation programme under the Marie Sklodowska-Curie Doctoral Network Datahyking (Grant No. 101072546).}

\author{Hicham Kouhkouh}
\address{Hicham Kouhkouh 
\newline
\indent Institut f\"ur Mathematik,  RTG ``Energy, Entropy, and Dissipative Dynamics'', RWTH Aachen University, 52062, Aachen, Germany 
}
\curraddr{
Department of Mathematics and Scientific Computing, NAWI, University of Graz, 8010, Graz, Austria 
}

\email{\texttt{kouhkouh@eddy.rwth-aachen.de}, \; \texttt{hicham.kouhkouh@uni-graz.at}}
\thanks{}





\date{\today}

\begin{abstract}
The control of relaxation-type systems of ordinary differential equations is investigated using the Hamilton--Jacobi--Bellman equation. First, we recast the model as a singularly perturbed dynamics which we embed in a family of controlled systems. Then we study this dynamics together with the value function of the associated optimal control problem.  We provide an asymptotic expansion in the relaxation parameter of the value function. We also show that its solution converges toward the solution of a  Hamilton--Jacobi--Bellman equation for a reduced control problem. Such systems are  motivated by semi-discretisation of kinetic and hyperbolic partial differential equations. Several examples are presented including Jin--Xin relaxation.
\end{abstract}

\subjclass[MSC]{34H05, 35F21}
\keywords{Stiff relaxation system, Singular Perturbations, Asymptotic Expansion, Hamilton-Jacobi-Bellman Equations, Jin--Xin relaxation}

\maketitle

\section{Introduction}

We are interested in systems of ordinary differential equations (ODE) of the form
\begin{equation}
    \label{eq: sys intro 1}
    \dot{z}(s) = f(z(s),s) + \frac{1}{\varepsilon} \, g(z(s),s)
\end{equation}
where $\varepsilon>0$ is the stiffness parameter, representing for example a discretisation of a system of relaxation-type partial differential equations (PDE). 
Such systems have been studied intensively in the context of numerical methods, see e.g. \cites{MR2029975,MR2231946} for implicit--explicit discretisation methods, and in particular, in the PDE context where the above form arises, e.g. in semi-Lagrangian approximations to hyperbolic and kinetic transport equations \cites{MR3698447,MR2970736,MR1619910,MR3109810,MR3202241}.
Many approaches focus  on analytical \cite{MR1693210,MR3267352}  or numerical aspects  of the  previous relaxation--type system  \eqref{eq: sys intro 1}. To the best of our knowledge, only a few recent results exist dealing with such problem in the context of optimal control,  
see  \cites{MR3921027,MR4456458,MR4471483,MR2891921,MR2825372}. The latter publications focus on (high--order) numerical discretisation of an optimal control problem using Pontryagin's maximum principle. In particular, the  conditions on the numerical schemes have been a recent focus for discussion, see e.g. \cite{MR1804658,MR3054355,MR3072232}. 
Contrary to those approaches,  we rely on a formulation using the Hamilton--Jacobi--Bellman (HJB) equation. Our goal is twofold: first, we seek the limiting differential equation of \eqref{eq: sys intro 1} when $\varepsilon\to 0$, then we study this limit in the context of optimal control after embedding the latter in a family of parameterised differential equations. In particular, we provide an asymptotic expansion in $\varepsilon$ of the value function \eqref{eq: value intro 1} of such optimal control problem. Our analysis focuses on the value function of the optimal control problem and the corresponding HJB equation. It also allows for an asymptotic expansion of higher-orders, in particular when performed in the situation of Jin--Xin relaxation.

\subsection{Motivation}

The semi-discretisation of some partial differential equations of kinetic or hyperbolic type can exhibit stiffness yielding to an ODE of the form \eqref{eq: sys intro 1}. Its numerical implementation suffers from the lack of stability due to the smallness of the parameter $\varepsilon$ compared to the step size. Therefore, a challenging problem is the design of strategies that are reliable when dealing with such systems. Additionally, finding optimal controls which allow the dynamics to achieve a given goal while maintaining the stability of its numerical implementation is also highly desirable for example in the context of control of (nonlinear) PDEs. In particular when using an asymptotic expansion of the value function, and having in mind the control parameters expressed in their feedback form, one could quantify the impact of the stiffness in the optimal control. 
This would allow for more stability at the numerical level, and for a better design of optimal parameters. \\
\indent  
Although our main motivation is driven by  the numerical simulation of such equations, our goal is to prepare for a self-contained theoretical framework for which numerical simulations can later be undertaken and justified.

Let us mention that the difference between one of the author's thesis \cite{kouhkouh22phd} and the present manuscript lies in the particular structure of our model which enables us to construct a relaxed system that is much easier to work with, in particular in view of the applications we are motivated by. These applications have not been mentioned in the latter thesis, and hopefully pave the way for more results at their numerical level. Additionally, the three-scale analysis is also not present in \cite{kouhkouh22phd}. 

\subsection{The general approach}

Starting from \eqref{eq: sys intro 1}, we introduce a new variable $y(\cdot)$ whose dynamics captures the \textit{fast} part, that is
\begin{equation}
    \label{eq: fast intro 1}
    \dot{y}(s) = \frac{1}{\varepsilon} \, g(z(s),s). 
\end{equation}
Then, together with the dynamics of $z(\cdot)$, we obtain the following equivalent system of ordinary differential equations 
\begin{equation*}
    \begin{aligned}
        \dot{z}(s) & = f(z(s),s) + \dot{y}(s),\\
        \varepsilon\, \dot{y}(s) &= g(z(s),s),
    \end{aligned}
\end{equation*}
which takes the form 
\begin{equation}
    \label{eq: SP intro 1}
    \begin{aligned}
        \dot{\mathbf{z}}(s) & = F(\mathbf{z}(s), \mathbf{y}(s), s), \\
        \varepsilon\, \dot{\mathbf{y}}(s) &= G(\mathbf{z}(s), \mathbf{y}(s), s).
    \end{aligned}
\end{equation}
Indeed, we may set $\mathbf{z} := z-y$, $\mathbf{y} := y$, and then $F(\mathbf{z},\mathbf{y},s) := f(\mathbf{z}+\mathbf{y},s)$ and $G(\mathbf{z},\mathbf{y},s) := g(\mathbf{z}+\mathbf{y},s)$.

The formulation \eqref{eq: SP intro 1} has the advantage of being \textit{singularly perturbed} in time only. We refer to the monograph \cite{sanders2007averaging}. 
Such a structure may be easier to study, also when the dynamics are subject to an optimal control, and it benefits from a huge literature, e.g. the books \cite{bensoussan2011asymptotic, dontchev2006well, kokotovic1999singular}, the papers from dynamical systems viewpoint \cite{grammel2004nonlinear, grammel1997averaging, gaitsgory1999limit, artstein1997tracking, artstein2000value, gaitsgory1992suboptimization, gaitsgory2024averaging}, or from PDE viewpoint \cite{terrone2011limiting, alvarez2007multiscale, alvarez2008multiscale} to name but a few. We also refer to the thesis \cite{terrone2008singular, kouhkouh22phd} and the references therein. 

The next step is to embed the system \eqref{eq: SP intro 1} into a family of parameterised (controlled) system of ODEs
\begin{equation}
    \label{eq: CSP intro 1}
    \begin{aligned}
        \dot{\mathbf{z}}(s) & = F(\mathbf{z}(s), \mathbf{y}(s), \alpha(s), s), \quad \mathbf{z}(0) = \mathbf{z}_0, \\
        \varepsilon \, \dot{\mathbf{y}}(s) &= G(\mathbf{z}(s), \mathbf{y}(s), \beta(s), s), \quad \mathbf{y}(0)=\mathbf{y}_0.
    \end{aligned}
\end{equation}
This dynamics now includes two control parameters $\alpha(\cdot)$ and $\beta(\cdot)$, that are chosen in order to minimise the following cost functional 
\begin{equation}\label{eq: cost}
    J(\mathbf{z}) = \int_{0}^{t}\ell(\mathbf{z}(s))\,\dd s + \phi(\mathbf{z}(t)).
\end{equation}
The precise assumptions on the data of the problem will be made precise soon after (see Section \ref{sec: hjb}). In particular, our analysis will be focused on systems that are affine in the control. 

The analysis of control problems when viewed from the PDE perspective relies on the celebrated HJB equation whose solution is the value function defined by
\begin{equation}
    \label{eq: value intro 1}
    V^{\varepsilon}(\mathbf{z}_0, \mathbf{y}_0, t) = \inf\limits_{ (\alpha,\beta) } \; J(\mathbf{z}), \quad \text{ s.t.: } \quad \eqref{eq: CSP intro 1}.
\end{equation}

In the present manuscript, we are interested in the limit of $V^\varepsilon$ when $\varepsilon \to 0$, as well as in  an asymptotic expansion of the form
\begin{equation*}
     V^{\varepsilon} = V_{0} + \varepsilon\, V_{1}  + O(\varepsilon^2). 
\end{equation*}
This ansatz is inspired by the earlier work in \cite{o1974introduction, lions2006perturbations, bensoussan2011asymptotic}, provided one could justify existence of each one of the terms in the right-hand side. 
Such expansion is motivated by the design of optimal controls which are usually obtained from (the gradient of) the value function when they are considered in the feedback form.

In the second part, we expand the analysis and consider in equation \eqref{eq: sys intro 1} a function $g(z,t) = g^{\varepsilon}(z,t)$ depending on $\varepsilon$ such that
\begin{equation}
    \label{eq: g eps}
    g^{\varepsilon}(z,t) = g_{0}(z,t) + \varepsilon\, g_{1}(z,t) + O(\varepsilon^{2}).
\end{equation}
The goal thereafter will be to analyse the impact of  $g_{0}$
 and $g_{1}$ on the asymptotic expansion of the value function defined in \eqref{eq: value intro 1}.

The paper is organised as follows. We study a system of the form \eqref{eq: CSP intro 1} that is affine in the control parameters. In \textbf{Section \ref{sec: linear}} we provide a first analysis of the latter and the convergence of such trajectories as $\varepsilon\to 0$. In \textbf{Section \ref{sec: hjb}}, we focus on the value function of the corresponding control problem. We start from an ansatz on its first order asymptotic expansion as $\varepsilon\to 0$, then we show the existence of each one of the terms in this expansion. We also provide a control representation of the term in the leading order. This is summarised in \textbf{Theorem \ref{thm conv}}.  
Our next main contribution is in the higher-order asymptotic expansion which is the object of \textbf{Section \ref{sec: multiscale sys}}. We will show in \textbf{Theorem \ref{thm: conv multi}} how to obtain the terms in the second-order expansion, but the same method can be analogously be performed for higher order terms. This is then applied in \textbf{Section \ref{Sec: Application}} to the celebrated Jin--Xin relaxation scheme for which we obtain \textbf{Corollary \ref{cor: JX}} and \textbf{Corollary \ref{cor JX 3}}. We conclude the manuscript with \textbf{Appendix \ref{appendix}} where we state the abstract optimal control problem obtained when $\varepsilon\to 0$ in a more general setting. Lastly, we show in \textbf{Appendix \ref{appendix examples}} how to apply our result with several examples.

\section{Singular perturbations for affine optimal control problems}\label{sec: linear}

Denote by $z$ and $y$  the slow and fast variables respectively, subject to the following system of singularly perturbed ODEs 
with affine control inputs
\begin{equation}
    \label{CSP}
    \begin{aligned}
        \dot{z}(s) & = A_{1}(z)y + B_{1}(z)\alpha + C_{1}(z),& z(0) = z_{0}\in \mathds{R}^{m},&\\ 
        \varepsilon\dot{y}(s) & = A_{2}(z)y + B_{2}(z)\beta + C_{2}(z),& y(0) = y_{0}\in\mathds{R}^{n},& 
    \end{aligned}
\end{equation}
where the time variable $s\in[0,1]$, and $\varepsilon>0$ is a small parameter. In this section, we will assume the following. \\
\indent \textbf{Assumptions and notations (A): } \vspace*{-3mm}
\begin{enumerate}[label = \textbf{(a.\arabic*)}]
    \item $z(\cdot):[0,1] \to \mathds{R}^{m}$ and $y(\cdot):[0,1]\to \mathds{R}^{n}$ are the state trajectories. 
    \item $\alpha(\cdot):[0,1]\to \Omega_{A}$ and $\beta(\cdot):[0,1]\to \Omega_{B}$ are measurable functions, and are the admissible controls. 
    \item The control sets $\Omega_{A}\subset \mathds{R}^{p}$ and $\Omega_{B}\subset \mathds{R}^{q}$ are both compact and convex. 
    \item $A_{1}(\cdot): \mathds{R}^{m}\to \mathds{M}^{m,n}$ is a matrix-valued function.
    \item $A_{2}(\cdot): \mathds{R}^{m}\to \mathds{M}^{m,m}$ is a matrix-valued function.
    \item $\exists\,\lambda>0$ such that the matrix function $A_{2}(\cdot)$ enjoys the stability condition
    \begin{equation}\label{assumption fast}
    \left\| e^{A_{2}(z)t} \right\| \leq e^{-\lambda t},\; \text{locally uniformly in } z, 
    \end{equation}
    i.e. the eigenvalues of the matrix $A_{2}$ have negative real parts.
    \item $B_{1}(\cdot):\mathds{R}^{m}\to \mathds{M}^{m,p}$ and $B_{2}(\cdot):\mathds{R}^{m}\to \mathds{M}^{n,q}$ are matrix-valued functions. 
    \item $C_{1}(\cdot):\mathds{R}^{m}\to \mathds{R}^{m}$ and $C_{2}(\cdot):\mathds{R}^{m}\to \mathds{R}^{n}$ are vector-valued functions. 
    \item $\Phi(\cdot):\mathds{R}^{n}\to \mathds{R}$ is continuous, and shall be the cost functional. 
\end{enumerate}

We will also use the notation $p\cdot q = p^{\top}q = \sum_{i}p_{i}q_{i}$ to denote the scalar product of two vectors $p,q$ of the same size. Here $p^{\top}$ is the transpose of the vector $p$. 

The control parameters $\alpha,\beta$ are chosen in order to achieve the minimisation 
\begin{equation}\label{SP}
\tag{SP}
    G_{\varepsilon}:=\inf \; \Phi(z(1)),
\end{equation}
subject to the singularly perturbed and controlled trajectories \eqref{CSP} with $s\in [0,1]$.

Next, we set $\varepsilon=0$ in \eqref{CSP} and solve the static equation:
\begin{equation}
\label{static}
    0 = A_{2}(z)y + B_{2}(z)\beta + C_{2}(z),
\end{equation}
whose root $y=\psi(z,\beta)$ is then substituted in the first equation in \eqref{CSP}, yielding what we shall refer to as the reduced dynamics
\begin{equation}
    \label{reduced}
    \dot{\bar{z}}(s) = A_{1}(\bar{z})\psi(\bar{z},\beta) + B_{1}(\bar{z})\alpha + C_{1}(\bar{z}), \quad \bar{z}(0) = z_{0}\in \mathds{R}^{m}.
\end{equation}
This controlled ODE is associated with the minimisation of the cost functional
\begin{equation}\label{R}
\tag{R}
    \bar{G}:=\inf \; \Phi(\bar{z}(1)).
\end{equation}

We are now ready to state some results and definitions from \cite{gaitsgory1992suboptimization} on the limiting problem when $\varepsilon\to 0$. 
\begin{defn}
Given a trajectory $x(\cdot)$, an admissible control $(\alpha_{x(\cdot)},\beta_{x(\cdot)})$ is referred to as \underline{$x(\cdot)$-approximating} if  it generates in \eqref{CSP} a trajectory $z_{\varepsilon}(\cdot)$ satisfying 
\begin{equation*}
    \max\limits_{s\in[0,1]} \|z_{\varepsilon}(s) - x(s)\| \leq \mu(\varepsilon) \to 0 \text{ as } \varepsilon\to 0.
\end{equation*}
\end{defn}
Note that in this definition, the control $(\alpha_{x(\cdot)},\beta_{x(\cdot)})$ is not necessarily the control of the admissible trajectory $x(\cdot)$. Moreover, $\alpha$ and $\beta$ do not need to be necessarily different.

\begin{defn}\label{def:approx}
    We say that the problem \eqref{SP} is \underline{approximated by} the problem \eqref{R} if $G_{\varepsilon}\to \bar{G}$ as $\varepsilon\to 0$ and if, corresponding to any admissible trajectory $\bar{z}_{\nu}(t)$ of the reduced system \eqref{reduced} such that $\Phi(\bar{z}_{\nu}(1))\leq \bar{G}+\nu$, $\nu>0$, there exists $\bar{z}_{\nu}(\cdot)$-approximating control providing in the problem \eqref{SP} the value of the functional differing from the optimal one by $\nu+\kappa(\varepsilon)$, where $\kappa(\varepsilon)\to 0$ as $\varepsilon\to 0$.
\end{defn}

\begin{thm}\label{thm: conv G}
Under the assumptions (A), the problem \eqref{SP} is approximated by the problem \eqref{R}.
\end{thm}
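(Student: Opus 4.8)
The plan is to verify the two requirements packaged in Definition~\ref{def:approx} separately: the value convergence $G_\varepsilon\to\bar G$ and the existence of approximating controls. The whole argument hinges on the affine-in-control structure of \eqref{CSP} together with the convexity of $\Omega_A,\Omega_B$ in (a.3) and the exponential stability \eqref{assumption fast}; these are precisely the ingredients that let me stay within ordinary (non-relaxed) controls. I would first record the standing consequences of (a.6): since the eigenvalues of $A_2(z)$ have negative real part, $A_2(z)$ is invertible locally uniformly, so the root of \eqref{static} is the well-defined affine map $\psi(z,\beta)=-A_2(z)^{-1}(B_2(z)\beta+C_2(z))$, and from \eqref{assumption fast} one obtains a quadratic Lyapunov function $z\mapsto P(z)$ with locally uniform bounds (e.g.\ solving $A_2(z)^\top P+PA_2(z)=-I$, cf.\ \cite{kokotovic1999singular}).

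Using $P$ together with a confinement/bootstrapping argument, I would show that the trajectories $(z_\varepsilon,y_\varepsilon)$ of \eqref{CSP} remain in a fixed compact set and that $\|y_\varepsilon\|_\infty$ is bounded uniformly in $\varepsilon$ and in the admissible controls. For the upper bound $\limsup_\varepsilon G_\varepsilon\le\bar G$, I would take a reduced control $(\bar\alpha,\bar\beta)$ with $\Phi(\bar z(1))\le\bar G+\nu$, insert it into \eqref{CSP}, and invoke a Tikhonov-type estimate: writing $w_\varepsilon=y_\varepsilon-\psi(z_\varepsilon,\bar\beta)$, the Lyapunov function forces $w_\varepsilon$ into an $O(\varepsilon)$ boundary layer whose contribution to $z_\varepsilon$ through $A_1(z_\varepsilon)w_\varepsilon$ is $O(\varepsilon)$, so a Gr\"onwall estimate gives $\max_s\|z_\varepsilon(s)-\bar z(s)\|\le\mu(\varepsilon)\to0$. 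Continuity of $\Phi$ yields $\Phi(z_\varepsilon(1))\to\Phi(\bar z(1))$, hence $\limsup_\varepsilon G_\varepsilon\le\bar G+\nu$, and $\nu\to0$ closes it. This same construction simultaneously supplies the $\bar z_\nu$-approximating control required in the second half of Definition~\ref{def:approx}, with $\kappa(\varepsilon)=|\Phi(z_\varepsilon(1))-\Phi(\bar z_\nu(1))|+|\bar G-G_\varepsilon|$.

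The heart is the lower bound $\liminf_\varepsilon G_\varepsilon\ge\bar G$. I would take near-optimal controls $(\alpha_\varepsilon,\beta_\varepsilon)$, so $\Phi(z_\varepsilon(1))\le G_\varepsilon+\varepsilon$. The uniform bounds make $\{z_\varepsilon\}$ equi-Lipschitz, so by Arzel\`a--Ascoli $z_\varepsilon\to z^*$ uniformly along a subsequence; extract weak-$*$ limits $y_\varepsilon\rightharpoonup y^*$, $\alpha_\varepsilon\rightharpoonup\alpha^*$, $\beta_\varepsilon\rightharpoonup\beta^*$ in $L^2$. Here convexity is indispensable: the sets of $L^2$-functions valued in the convex compact sets $\Omega_A,\Omega_B$ are weakly closed, so $\alpha^*(s)\in\Omega_A$ and $\beta^*(s)\in\Omega_B$ stay admissible. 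Since $z_\varepsilon\to z^*$ strongly, the coefficients $A_i(z_\varepsilon),B_i(z_\varepsilon),C_i(z_\varepsilon)$ converge uniformly, so products of strongly and weakly convergent factors pass to the limit weakly. Letting $\varepsilon\to0$ in the weak (integrated) form of the fast equation, the term $\varepsilon\dot y_\varepsilon\rightharpoonup0$ and one recovers $0=A_2(z^*)y^*+B_2(z^*)\beta^*+C_2(z^*)$, i.e.\ $y^*=\psi(z^*,\beta^*)$; passing to the limit in the slow equation then produces exactly the reduced dynamics \eqref{reduced} driven by $(\alpha^*,\beta^*)$. Hence $z^*$ is an admissible reduced trajectory, $\Phi(z^*(1))\ge\bar G$, and since $z_\varepsilon(1)\to z^*(1)$ we conclude $\liminf_\varepsilon G_\varepsilon\ge\Phi(z^*(1))\ge\bar G$.

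The principal obstacle is twofold. Analytically, the $z$-dependence of $A_2$ makes the fast subsystem genuinely time-varying, so the pointwise estimate \eqref{assumption fast} does not by itself yield uniform decay: the uniform a priori bound on $y_\varepsilon$ (needed for equicontinuity of $z_\varepsilon$) must come from the Lyapunov/confinement argument rather than a naive Duhamel bound, and this requires care when $\beta_\varepsilon$ is merely measurable. Conceptually, the delicate point is the limit identification in the lower bound, since a priori the averaged fast dynamics could demand relaxed (Young-measure) controls; it is precisely the affine dependence on $(y,\alpha,\beta)$ together with the convexity in (a.3) that collapses the relaxed reduced problem onto \eqref{R} with ordinary controls and keeps $(\alpha^*,\beta^*)$ admissible. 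Finally, I would note that these verifications are exactly the hypotheses of the general suboptimization theorem of \cite{gaitsgory1992suboptimization}, so an alternative and shorter route is to check (A) against that reference and invoke it directly.
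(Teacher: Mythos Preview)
Your proposal is essentially correct, and you even identify the paper's own route in your final sentence: the paper's proof consists of a single-line citation to Theorem~4.3 and Example~4.3 in \cite{gaitsgory1992suboptimization}, whereas you lay out a self-contained argument (Tikhonov/Lyapunov estimate for the upper bound and the approximating-control clause, Arzel\`a--Ascoli plus weak-$*$ compactness for the lower bound, with the affine structure and convexity of $\Omega_A,\Omega_B$ ensuring that the weak limits stay admissible and that no Young-measure relaxation is needed). What your approach buys is transparency---one sees exactly where each hypothesis in (A) enters, in particular why convexity in (a.3) is essential for the lower bound and why \eqref{assumption fast} drives the boundary-layer decay---at the cost of having to handle carefully the $z$-dependence of $A_2$ in the a~priori bound for $y_\varepsilon$, which you rightly flag as the main technical hurdle. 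The paper's citation approach is of course shorter and defers these details to \cite{gaitsgory1992suboptimization}, where the affine example is treated explicitly; your direct argument is in substance a reconstruction of that example's verification.
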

\noindent For the proof we refer to Theorem 4.3 and Example 4.3 in \cite{gaitsgory1992suboptimization}.

\begin{rem}\label{rem periodic}
Assumption \eqref{assumption fast} guarantees that all admissible trajectories $\{z(s),y(s)\}$ of \eqref{CSP} remain in a compact subset of $\mathds{R}^{m}\times \mathds{R}^{n}$ for all $s\in [0,1]$ and for all initial positions $(z_{0}, y_{0})$ chosen in a compact subset of $\mathds{R}^{m}\times \mathds{R}^{n}$. We can therefore assume without loss of generality that $y(\cdot)$ remains in the $n$-dimensional torus $\mathds{T}^{n}$, and the dynamics \eqref{CSP} is periodic in the $y$-variable.
\end{rem}

The convergence stated in Theorem \ref{thm: conv G} is further investigated in the next section, seeking how it translates at the level of the HJB equation and the value function. 

\section{On the HJB equation and the reduction method}\label{sec: hjb}

In this section, we analyse how the reduction \eqref{reduced} extends to  the
HJB equation associated with the control problems. Consider a finite horizon control problem whose cost functional  is
\begin{equation}
    \label{cost}
    J(z) := \int_{0}^{t}  \ell(z(s)) \,\dd s + \phi(z(t)),
\end{equation}
where  $\ell,\phi$ are real continuous functions. The value function is 
\begin{equation}\label{value eps}
    V^{\varepsilon}(z,y,t) := \inf\limits_{\alpha(\cdot),\beta(\cdot)}\, J(z), \quad \text{ s.t.: } \; \eqref{CSP}\,,
\end{equation}
and $\alpha(\cdot),\beta(\cdot) \in L^{\infty}([0,+\infty))$ with values in the compact and convex sets $\Omega_A$ and $\Omega_B$ respectively, subsets of $\mathds{R}^{p}$ and $\mathds{R}^{q}$.

\begin{rem}
It is well-known that such a control problem can be reformulated as the problem \eqref{SP} which is of Mayer type. To do so, it suffices to add a new \textit{slow} variable $x(\cdot)\in \mathds{R}$, and the system becomes 
\begin{equation*}
    \begin{aligned}
        \dot{x}(s) & = \quad \; 0\quad \, + \quad \,  0 \quad \;\, +\quad \ell(z), & \;x(0) = 0\in \mathds{R},\, \; \quad&\\
        \dot{z}(s) & = A_{1}(z)y + B_{1}(z)\alpha + C_{1}(z),& z(0) = z_{0}\in \mathds{R}^{m},&\\ 
        \varepsilon\dot{y}(s) & = A_{2}(z)y + B_{2}(z)\beta + C_{2}(z),& y(0) = y_{0}\in\mathds{R}^{n}.&
    \end{aligned}
\end{equation*}
This is of the form \eqref{CSP} if we consider instead of $z$, the new slow variable $(x,z)$. And while all the functions depend on $z$, they could be indeed functions of the whole variable $(x,z)$. 
Then the objective function would be to minimise: $\inf\; x(t) + \phi(z(t))$. 
The time horizon being finite, it can easily be parameterised to $s \in [0,1]$, and $t=1$.
\end{rem}

We shall now state our standing assumptions (SA) that we assume to hold in the rest of the paper.\\ 
\indent \textbf{The standing assumptions (SA):}\vspace*{-2mm}
\begin{enumerate}[label = \textbf{(SA.\arabic*)}]
    \item The functions $\ell(\cdot), \phi(\cdot)$ are bounded and uniformly continuous.
    \item The state trajectories $z(\cdot),y(\cdot)$, and the controls $\alpha,\beta$ are defined in \textbf{(a.1,2)}.
    \item The dynamics $y(\cdot)$ lives in an $n$-dimensional torus $\mathds{T}^{n}$, and all the functions are periodic in $y$; see Remark \ref{rem periodic}.
    \item The functions $A_{i}(\cdot), B_{i}(\cdot), C_{i}(\cdot), i=1,2$ are defined in the assumptions \textbf{(a.4,5,7,8)}, and are moreover locally Lipschitz with at most a linear growth.
    \item The function $A_{2}(\cdot)$ enjoys the stability condition \eqref{assumption fast} in \textbf{(a.6)}. 
    \item For all $z\in \mathds{R}^{m}$, the matrix $B_{2}(z)$ has full rank. 
    \item The control sets $\Omega_{A}$ and $\Omega_{B}$ are sufficiently large compact and convex. 
\end{enumerate} 

The regularity assumptions are classical assumptions to ensure well-posedeness of the ODEs and for the study of the optimal control problem. 
Assumption (SA.5) ensures the validity of our Remark \ref{rem periodic}, hence justifying the periodicity assumption in (SA.3). This is needed to gain compactness in the singularly perturbing variables (the fast ones) and pass to the limit. The unbounded case is more delicate (see \cite{bardi2023singular, bardi2022deep}). 
The last two assumptions, together with Remark \ref{rem periodic} ensure the following controllability condition is satisfied
\begin{equation}\label{strong controllability}
    \begin{aligned}
         & \text{for any } z \text{ fixed}, \text{ there exists } \, r(z)>0 \, \text{ such that: } \\
         & B\big(0,r(z)\big) \subseteq \overline{\text{co}}\big\{ A_{2}(z)y + B_{2}(z)\beta + C_{2}(z)\, : \beta \in \Omega_B\big\}, \; \forall y\in \mathds{T}^{n},
    \end{aligned}
\end{equation} 
where $B\big(0,r(z)\big)$ is the ball centred in $0$ with radius $r(z)$, and $\overline{\text{co}}$ refers to the closed convex hull. 
Indeed, as the dynamics $y$ remains in a compact set and the matrix $B_2$ has full rank, the control can span a set that is large enough to contain a ball around $0$. This means that with this control, one could generate all directions of the state space having then access to all positions, which ensures the controllability of the system. This condition is needed for Lemma \ref{lem: corrector} and Theorem \ref{thm conv}.

By standard results on viscosity solutions for Hamilton-Jacobi-Bellman (HJB) equation (see e.g. \cite[Chapter III]{bardi2008optimal}), the value function is such that $V^{\varepsilon}\in C(\mathds{R}^{m}\times\mathds{R}^{n}\times [0,T])$ for $T<\infty$, and it is the unique viscosity solution to the HJB equation
\begin{equation}\label{HJB}
\left\{ \; 
\begin{aligned}
    & \partial_{t} V^{\varepsilon} + H\left(z,y,D_{z}V^{\varepsilon},\frac{1}{\varepsilon}D_{y}V^{\varepsilon}\right) = \ell(z),\quad \text{in } \mathds{R}^{m}\times\mathds{R}^{n}\times (0,T),\\
    & V^{\varepsilon}(z,y,0) = \phi(z), \text{ in } \mathds{R}^{m}\times\mathds{R}^{n},
\end{aligned}
\right.
\end{equation}
where the Hamiltonian $H$ is
\begingroup
\allowdisplaybreaks
    \begin{align*}
    & H(z,y,p,q)\\
    & = \sup\limits_{\alpha\in \Omega_{A}, \beta\in \Omega_{B}}\{-p\cdot (A_{1}(z)y + B_{1}(z)\alpha + C_{1}(z)) - q\cdot (A_{2}(z)y + B_{2}(z)\beta + C_{2}(z))\}\\
    & = -(A_{1}(z)y + C_{1}(z))\cdot p + \sup\limits_{\alpha\in \Omega_A}\big\{ -p\cdot B_{1}(z)\alpha  \big \} \\
    & \quad \quad \quad - (A_{2}(z)y  + C_{2}(z))\cdot q  + \sup\limits_{\beta\in \Omega_B}\big\{ -q\cdot B_{2}(z)\beta \big \}.
    \end{align*}
\endgroup

In the sequel, and only for the sake of simplicity of notation, we shall drop the dependence of $A_{i},B_{i},C_{i},i=1,2$ on the variable $z$. 

\subsection{The first--order asymptotic expansion} \label{sec: first order asymptot}

Given the optimal control problem \eqref{value eps}, we are interested in the limit as $\varepsilon\to 0$ of both the value function $V^{\varepsilon}$ (at the level of the PDE problem \eqref{HJB}) and the underlying controlled and singularly perturbed trajectories \eqref{CSP}. In \cite{homocp} (see also Chapter I in \cite{kouhkouh22phd}), this convergence is proved using \textit{Limit Occupational Measure Sets} (LOMS); see Appendix \ref{appendix}. These measures are essential in the construction of the limiting optimal control problem and they have been  used already in \cite{gaitsgory1992suboptimization} and references therein. 
\par 
Using the results in \cite{homocp} (see also \cite[Chapter I]{kouhkouh22phd}), we show in Theorem \ref{thm conv} below that the solution $V^{\varepsilon}$ to \eqref{HJB} converges locally uniformly to $V_{0}$ unique continuous viscosity solution in $\mathds{R}^{m}\times (0,T)$ of
\begin{equation}\label{HJB 0}
\left\{\;
\begin{aligned}
    & \partial_{t} V_{0}  - C_{1}(z)\cdot\partial_{z}V_{0} + \sup\limits_{\alpha\in \Omega_A}\{-\partial_{z}V_{0}\cdot B_{1}(z)\alpha  \} + \bar{\lambda}_{1}(z,\partial_{z}V_{0}) = \ell(z), \\
    & V_{0}(z,0) = \phi(z), \text{ in } \mathds{R}^{m},
\end{aligned}
\right.
\end{equation}
where, for fixed $z$ and $p:=\partial_{z}V_{0}(z)$, we have $\bar{\lambda}_{1}(z,p)$ a constant in $y$ for which there exists a continuous viscosity solution $V_{1}(y)$ to the so-called \textit{cell problem}
\begin{equation}\label{cell}
    \begin{aligned}
          - (A_{2}y + C_{2})\cdot \partial_{y}V_{1}+\sup\limits_{\beta\in \Omega_B}\{ \; -\partial_{y}V_{1}\cdot B_{2}\beta  \} - p^{\top}A_{1}y = \bar{\lambda}_{1}(z,p).
    \end{aligned}
\end{equation}

In this case, one would have the following first order asymptotic expansion 
\begin{equation}\label{expansion 1}
    V^{\varepsilon}(t,z,y) = V_{0}(t,z) + \varepsilon V_{1}(y) + O(\varepsilon^{2}),
\end{equation}
where $V_{0}$ solves \eqref{HJB 0}.

More precisely, given the optimal control problem \eqref{value eps} where the dynamics $(y(\cdot), z(\cdot))$ in \eqref{CSP} satisfies the standing assumptions (SA), 
we have the following.

\begin{lem}\label{lem: corrector}
    Let $z,p$ be fixed in $\mathds{R}^{m}$. Under the standing assumptions (SA), there exists a unique $\bar{\lambda}_{1}(z,p)\in \mathds{R}$ such that \eqref{cell} has a continuous periodic viscosity solution $V_{1}(\cdot)$.
\end{lem}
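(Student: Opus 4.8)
The plan is to identify $\bar\lambda_{1}(z,p)$ with the ergodic constant of the fast controlled subsystem $\dot y = A_{2}y + B_{2}\beta + C_{2}$ carrying the running cost $p^{\top}A_{1}y$, obtained through the classical discounted (Abelian) approximation on the torus, in the spirit of \cite{alvarez2010ergodicity}. Fix $z$ and $p$, and introduce the fast Hamiltonian
\begin{equation*}
    \mathcal{H}(y,q) := -(A_{2}y + C_{2})\cdot q + \sup_{\beta\in\Omega_B}\{-q\cdot B_{2}\beta\} - p^{\top}A_{1}y,
\end{equation*}
regarded, consistently with (SA.3) and Remark \ref{rem periodic}, as a continuous function on $\mathds{T}^{n}\times\mathds{R}^{n}$ that is periodic in $y$; then \eqref{cell} reads $\mathcal{H}(y,\partial_{y}V_{1}) = \bar\lambda_{1}$. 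For each $\delta>0$ I first solve the discounted stationary equation $\delta w_{\delta} + \mathcal{H}(y,\partial_{y}w_{\delta}) = 0$ on $\mathds{T}^{n}$. Since $\mathcal{H}$ is continuous, convex in $q$, and satisfies the structural bound $|\mathcal{H}(y,q)-\mathcal{H}(y',q)| \le C|y-y'|(1+|q|)$, the comparison principle together with Perron's method yields a unique bounded uniformly continuous periodic viscosity solution $w_{\delta}$, namely the value function of the infinite-horizon discounted problem for the fast dynamics.

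Two $\delta$-uniform estimates drive the argument. First, since $y\mapsto p^{\top}A_{1}y$ is bounded on $\mathds{T}^{n}$, comparison gives $\|\delta w_{\delta}\|_{\infty}\le C_{0}$ independent of $\delta$. Second --- and this is the crux --- I claim $w_{\delta}$ admits a modulus of continuity independent of $\delta$. This is precisely where the strong controllability condition \eqref{strong controllability} (ensured by (SA.6)--(SA.7) and Remark \ref{rem periodic}) enters: it lets one steer the fast system between two nearby points of $\mathds{T}^{n}$ in time $\mathcal{O}(|y_{1}-y_{2}|)$ with controlled cost, so that, by splicing this manoeuvre onto a near-optimal control for the endpoint, $|w_{\delta}(y_{1})-w_{\delta}(y_{2})|\le \omega(|y_{1}-y_{2}|)$ with $\omega$ independent of $\delta$. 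The stability \eqref{assumption fast} keeps all relevant trajectories in a fixed compact set, legitimising these estimates on the torus. I expect this uniform equicontinuity to be the main obstacle, since $\mathcal{H}$ is not coercive in $q$ and hence the bound cannot come from the equation alone.

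With these bounds in hand, set $\tilde w_{\delta} := w_{\delta} - \min_{\mathds{T}^{n}} w_{\delta}$, which is equibounded and equicontinuous. By Ascoli--Arzel\`a there is a sequence $\delta_{k}\to 0$ along which $\tilde w_{\delta_{k}}\to V_{1}$ uniformly, for some continuous periodic $V_{1}$, while $\delta_{k}w_{\delta_{k}}\to -\bar\lambda_{1}$ uniformly for a constant $\bar\lambda_{1}$ (the limit is constant because $\delta_{k}\tilde w_{\delta_{k}}\to 0$ uniformly). Passing to the limit in $\delta_{k}w_{\delta_{k}}+\mathcal{H}(y,\partial_{y}w_{\delta_{k}})=0$ by the stability of viscosity solutions gives $\mathcal{H}(y,\partial_{y}V_{1})=\bar\lambda_{1}$ on $\mathds{T}^{n}$, i.e. $(\bar\lambda_{1},V_{1})$ solves \eqref{cell}. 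This establishes existence.

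For uniqueness of the constant, suppose $(\lambda,V)$ and $(\mu,W)$ both solve \eqref{cell} with $V,W$ continuous and periodic. Doubling the variables, one maximises $V(y)-W(y')-|y-y'|^{2}/(2\eta)$ over $\mathds{T}^{n}\times\mathds{T}^{n}$; at the maximiser $(y_{\eta},y'_{\eta})$, setting $q_{\eta}:=(y_{\eta}-y'_{\eta})/\eta$, the subsolution inequality for $V$ and the supersolution inequality for $W$ give $\mathcal{H}(y_{\eta},q_{\eta})\le\lambda$ and $\mathcal{H}(y'_{\eta},q_{\eta})\ge\mu$. Since only the linear-in-$q$ part of $\mathcal{H}$ depends on $y$, the difference $\mathcal{H}(y'_{\eta},q_{\eta})-\mathcal{H}(y_{\eta},q_{\eta})$ is bounded by $C\big(|y_{\eta}-y'_{\eta}| + |y_{\eta}-y'_{\eta}|^{2}/\eta\big)$, which tends to $0$ by the standard penalisation estimate; hence $\mu\le\lambda$, and exchanging the roles of $V$ and $W$ yields $\lambda=\mu$. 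Thus $\bar\lambda_{1}(z,p)$ is uniquely determined, and in particular the full limit $\lim_{\delta\to 0}\delta w_{\delta}=-\bar\lambda_{1}$ exists, not merely along subsequences.
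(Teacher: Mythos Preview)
Your argument is correct and is precisely the vanishing-discount construction underlying \cite[Proposition~6.2]{alvarez2010ergodicity}, which is exactly what the paper invokes: its proof merely observes that the controllability condition \eqref{strong controllability} supplies the small-time controllability required there (via \cite[Example~1 in \S6.1]{alvarez2010ergodicity}). In other words, you have faithfully unpacked the cited result rather than taken a different route.
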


\begin{proof}
    This is a particular case of \cite[Proposition 6.2]{alvarez2010ergodicity} where it is assumed that the fast dynamics in our case is \textit{small time controllable}. But this is ensured by the controllability assumption \eqref{strong controllability}, see \cite[Example 1 in \S 6.1]{alvarez2010ergodicity}. 
\end{proof}

\begin{thm}\label{thm conv}
    As $\varepsilon\to 0$, the sequence $V^{\varepsilon}$ of solutions of \eqref{HJB} converges locally uniformly on $\mathds{R}^{m}\times (0,+\infty)$ to $V_{0}$ solution to
    \begin{equation}
        \label{HJB eff}
        \left\{\,
        \begin{aligned}
            & \partial_{t} V_{0} + \bar{H}\left(z,D_{z}V_{0}\right) = \ell(z),\quad \text{in } \mathds{R}^{m}\times (0,T),\\
            & V_{0}(z,0) = \phi(z), \text{ in } \mathds{R}^{m},
        \end{aligned}
        \right.
    \end{equation}
    and the limit (effective) Hamiltonian is
    \begin{equation*}
    \begin{aligned}
        \bar{H}(z,p) &= \sup\limits_{\alpha\in \Omega_{A},\beta\in \Omega_B}\left\{-p\cdot (A_{1}(z)\psi(z,\beta) + B_{1}(z)\alpha + C_{1}(z)) \right\},
    \end{aligned}
    \end{equation*}
    where $\psi(z,\beta)$ solves (for $y$) the static equation \eqref{static}. Moreover $V_{0}$ is the value function of the limit (effective) optimal control problem
    \begin{equation}\label{value eff}
    V_{0}(z,t) := \inf\limits_{\alpha(\cdot),\beta(\cdot)}\, J(z), \quad \text{ s.t.: } \; \eqref{reduced}\,,
    \end{equation}
    where $J$ is as defined in \eqref{cost}.
\end{thm}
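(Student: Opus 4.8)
The plan is to prove convergence by the half-relaxed limit method of Barles–Perthame combined with Evans' perturbed test function technique in the singular-perturbation form of Alvarez–Bardi \cite{alvarez2010ergodicity}, to identify the effective Hamiltonian through the cell problem \eqref{cell}, and to close the argument with the comparison principle for \eqref{HJB eff}. First I would record the a priori bounds: since $\ell,\phi$ are bounded and uniformly continuous (SA.1) and the horizon is finite, the family $\{V^{\varepsilon}\}$ is locally equibounded uniformly in $\varepsilon$, so the half-relaxed limits $\overline{V}:=\limsup^{*}V^{\varepsilon}$ and $\underline{V}:=\liminf_{*}V^{\varepsilon}$ are well defined, upper- and lower-semicontinuous respectively. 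Using the periodicity in $y$ (SA.3) and the boundedness of the corrector supplied by Lemma \ref{lem: corrector}, I would show $\overline{V}$ and $\underline{V}$ are independent of the fast variable, hence functions of $(z,t)$ alone.

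The core step is to prove that $\overline{V}$ is a viscosity subsolution and $\underline{V}$ a supersolution of \eqref{HJB 0}. Given a smooth $\varphi$ touching $\overline{V}$ from above at $(z_{0},t_{0})$, set $p:=\partial_{z}\varphi(z_{0},t_{0})$ and take the corrector $V_{1}(\cdot)$ solving \eqref{cell} for this $(z_{0},p)$ with ergodic constant $\bar{\lambda}_{1}(z_{0},p)$ from Lemma \ref{lem: corrector}. Testing the $\varepsilon$-equation \eqref{HJB} with the perturbed function $\varphi(z,t)+\varepsilon V_{1}(y)$, the $O(1/\varepsilon)$ contributions cancel exactly against the cell equation, and letting $\varepsilon\to 0$ leaves the inequality $\partial_{t}\varphi - C_{1}\cdot p + \sup_{\alpha}\{-p\cdot B_{1}\alpha\}+\bar{\lambda}_{1}(z_{0},p)\le \ell(z_{0})$, i.e. the subsolution inequality for \eqref{HJB 0}; the supersolution inequality for $\underline{V}$ is symmetric. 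Since the corrector is in general only continuous, I would run this at the level of viscosity inequalities as in \cite{alvarez2010ergodicity}, the small-time controllability \eqref{strong controllability} providing the regularity and equiboundedness of $V_{1}$ needed to make the perturbation licit.

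Next I would identify $\bar{\lambda}_{1}$ explicitly and collapse \eqref{HJB 0} to \eqref{HJB eff}. Equation \eqref{cell} is the ergodic HJB equation of the fast control problem with dynamics $\dot{y}=A_{2}y+B_{2}\beta+C_{2}$ and running cost $\ell_{c}(y)=p\cdot A_{1}y$, whose ergodic constant equals minus the optimal long-run average cost, $\bar{\lambda}_{1}=-\inf_{\beta}\langle \ell_{c}\rangle$. Because $A_{2}$ is stable \eqref{assumption fast} it is invertible, and every invariant occupational measure of the fast system has barycenter $\bar{y}$ satisfying $A_{2}\bar{y}+B_{2}\bar{\beta}+C_{2}=0$ for some $\bar{\beta}\in\Omega_{B}$ (using convexity of $\Omega_{B}$), that is $\bar{y}=\psi(z,\bar{\beta})$; since $\ell_{c}$ is linear in $y$ the corresponding average cost is $p\cdot A_{1}\psi(z,\bar{\beta})$. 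Hence $\inf_{\beta}\langle \ell_{c}\rangle=\inf_{\beta}p\cdot A_{1}\psi(z,\beta)$ and $\bar{\lambda}_{1}(z,p)=\sup_{\beta}\{-p\cdot A_{1}\psi(z,\beta)\}$, with controllability \eqref{strong controllability} guaranteeing attainability. Substituting into \eqref{HJB 0}, the $y$-independent slow terms pass through untouched and one obtains exactly \eqref{HJB eff} with $\bar{H}(z,p)=\sup_{\alpha,\beta}\{-p\cdot(A_{1}\psi(z,\beta)+B_{1}\alpha+C_{1})\}$.

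Finally I would invoke the comparison principle for \eqref{HJB eff}: the local Lipschitz continuity and linear growth of $A_{1},B_{1},C_{1}$ (SA.4), together with the Lipschitz dependence of $\psi(\cdot,\beta)=-A_{2}^{-1}(B_{2}\beta+C_{2})$ on $z$, make $\bar{H}$ satisfy the standard structural conditions, so \eqref{HJB eff} has a unique continuous viscosity solution and comparison holds. Thus $\overline{V}\le\underline{V}$; as the reverse is automatic, $\overline{V}=\underline{V}=:V_{0}$ and $V^{\varepsilon}\to V_{0}$ locally uniformly. The control representation \eqref{value eff} is then immediate from uniqueness, since the value function of the reduced problem with dynamics \eqref{reduced} and cost \eqref{cost} is, by standard theory \cite[Chapter III]{bardi2008optimal}, a viscosity solution of the very same equation \eqref{HJB eff}. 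I expect the main obstacle to be the perturbed test function step — reconciling the merely continuous corrector with the test-function perturbation — together with the rigorous identification of $\bar{\lambda}_{1}$ with the supremum over the static equilibria, where the affine structure, the stability \eqref{assumption fast}, and the controllability \eqref{strong controllability} are all essential.
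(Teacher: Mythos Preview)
Your argument is correct, but it follows a different route from the paper's. The paper does not carry out the perturbed test function method directly; instead it invokes a ready-made convergence theorem (from \cite{homocp}, \cite{kouhkouh22phd}) that delivers local uniform convergence of $V^{\varepsilon}$ to the solution of an effective HJB whose Hamiltonian is expressed through the \emph{Limit Occupational Measure Set} $\mathfrak{L}(z)$ of the fast subsystem, namely $\widetilde{H}(z,p)=\sup_{\alpha,\mu\in\mathfrak{L}(z)}\{-p\cdot\int f\,d\mu\}$. The remaining work in the paper is purely measure-theoretic: it quotes \cite[Example 3.4 and Lemma 3.1(ii)]{gaitsgory1992suboptimization} to show that, for the affine fast dynamics with stable $A_{2}$, every $\mu\in\mathfrak{L}(z)$ is a Dirac at $(\psi(z,\bar\beta),\bar\beta)$ for some $\bar\beta\in\Omega_{B}$, whence $\widetilde{H}=\bar{H}$.

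Your proof reaches the same endpoint by the Alvarez--Bardi PDE machinery: half-relaxed limits, perturbed test functions with the corrector from Lemma~\ref{lem: corrector}, comparison for the limit equation, and then an identification of the ergodic constant via the barycenter of invariant measures. Your barycenter computation is in fact the same observation the paper exploits through the LOMS language --- integrating $A_{2}y+B_{2}\beta+C_{2}$ against an invariant measure forces the mean to sit on the static manifold --- but you arrive at it from the cell problem rather than from Gaitsgory's occupational-measure framework. What the paper's route buys is brevity and a direct link to the dynamical convergence of trajectories (Theorem~\ref{thm: conv G}); what your route buys is self-containment at the PDE level and a transparent explanation of why the corrector and the controllability condition \eqref{strong controllability} enter, which the paper leaves implicit in the cited black boxes.
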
 

This theorem shows the convergence for $\varepsilon\to 0$ of the solution to the HJB. Applying the previous result and with the notation in \eqref{HJB 0}, we have 
\begin{equation*}
\begin{aligned}
    &\bar{H}(z,p) = -p\cdot C_{1}(z) + \sup\limits_{\alpha\in \Omega_A} \{-p\cdot B_{1}(z)\alpha \} + \bar{\lambda}_{1}(z,p),\\ 
    &\text{and } \quad \bar{\lambda}_{1}(z,p) = \sup\limits_{\beta\in \Omega_B} \{-p \cdot A_{1}(s)\psi(z,\beta) \}.
\end{aligned}
\end{equation*}
Furthermore, using \eqref{expansion 1}, one could obtain an asymptotic expansion of a closed-loop optimal control.

\begin{proof}
The standing assumptions (SA) guarantee the controllability condition \eqref{strong controllability}. We can then apply Theorem \cite[Theorem 3.1]{homocp} (or \cite[Theorem 1.3.1]{kouhkouh22phd})which ensures the value function $V^{\varepsilon}$ converges locally uniformly on $\mathds{R}^{m}\times(0,+\infty)$ to $\widetilde{V}$ solution to 
\begin{equation}
    \left\{ \; 
    \begin{aligned}
        & \partial_{t}\widetilde{V} + \widetilde{H}(z,D_{z}\widetilde{V}) = \ell(z),\; \text{ in } \mathds{R}^{m}\times(0,T),\\
        & \widetilde{V}(z,0) = \phi(z),\; \text{ in } \mathds{R}^{m},
    \end{aligned}
    \right.
\end{equation}
where 
\begin{equation*}
    \begin{aligned}
        \widetilde{H}(z,p) &= \sup\limits_{\alpha\in \Omega_{A},\mu\in \mathfrak{L}(z)}
        -p\cdot \int_{\mathds{T}^{n}\times \Omega_{B}}(A_{1}(z)y + B_{1}(z)\alpha + C_{1}(z)) \, \text{d}\mu(y,\beta),
    \end{aligned}
\end{equation*}
and $\mathfrak{L}(z)$ is the \textit{Limit Occupational Measure Set} (LOMS; see the definition in the Appendix \ref{appendix}). It turns out, however, that in the setting  of section \S \ref{sec: linear}, the LOMS coincides with the reduced dynamics as was proven in \cite[Example 3.4]{gaitsgory1992suboptimization}.  Indeed, let $\bar{\beta}\in \Omega_B$ be arbitrarily fixed, then using \cite[eq. (3.21)]{gaitsgory1992suboptimization}, the dynamics \eqref{CSP} at the limit $\varepsilon\to 0$ becomes 
\begin{equation*}
    \dot{\tilde{z}} = A_{1}(\tilde{z})\psi(\tilde{z},\bar{\beta}) + B_{1}(\tilde{z}) \alpha + C_{1}(\tilde{z}),
\end{equation*}
where $\psi(\tilde{z},\bar{\beta})$ solves \eqref{static}, that is
\begin{equation*}
    \psi(\tilde{z},\bar{\beta}) = - A_{2}^{-1}B_{2}\bar{\beta} - A_{2}^{-1}C_{2},
\end{equation*}
and is unique (see \cite[Lemma 3.1 (ii)]{gaitsgory1992suboptimization}). This leads to the reduced dynamics \eqref{reduced}, which can be equivalently represented as 
\begin{equation*}
    \dot{\tilde{z}} = \int_{\mathds{T}^{n}\times \Omega_B} \big[ A_{1}(\tilde{z})y + B_{1}(\tilde{z})\alpha  + C_{1}(\tilde{z})\,\big]\, \delta_{\psi(\tilde{z},\beta)}(\dd y) \otimes \delta_{\bar{\beta}}(\dd \beta).
\end{equation*}
Therefore, any $\mu \in \mathfrak{L}(z)$ corresponds to $\delta_{\psi(\tilde{z},\bar{\beta})}(\text{d}y)\!\otimes \!\delta_{\bar{\beta}}(\text{d}\beta)$, where $\psi(z,\bar{\beta})$ solves \eqref{static} for some $\bar{\beta}\in \Omega_B$, and so the supremum in the definition of $\widetilde{H}$ is taken over $\alpha\in \Omega_A$ and $\bar{\beta}\in \Omega_B$ which ultimately yields to $\widetilde{H}=\Bar{H}$. 
\end{proof}

\subsection{A tentative approach for second--order approximation}

The next term in the asymptotic expansion can be formally obtained by making the following 

\noindent \textbf{Ansatz. } $V^{\varepsilon}(z,y,t) = V_{0}(z,t;\varepsilon) + \varepsilon\,V_{1}(y;\varepsilon) + \varepsilon^{2}\,V_{2}(y) + O(\varepsilon^{3}).$

Using this ansatz in the PDE in \eqref{HJB} yields
\begin{equation*}
    \begin{aligned}
        & 0= \partial_{t} V_{0}  - C_{1}\cdot\partial_{z}V_{0} + \sup\limits_{\alpha\in \Omega_A}\{-\partial_{z}V_{0}\cdot B_{1}\alpha \} -\ell(z) - A_{1}y\cdot\partial_{z}V_{0}  \\
        & \quad \quad - (A_{2}y + C_{2})\cdot\partial_{y}V_{1} + \sup\limits_{\beta\in \Omega_B}\{ \; -\partial_{y}V_{1}\cdot B_{1}\beta  -\varepsilon(A_{2}y + C_{2} + B_{2}\beta)\cdot\partial_{y}V_{2}  \; \}.
    \end{aligned}
\end{equation*}
We identify the terms depending on the fast variable $y$ according to the powers of $\varepsilon$. This implies  having  pairs $(\lambda_{1},V_{1}(\cdot))$ and $(\lambda_{2}, V_{2}(\cdot))$ where $\lambda_{1},\lambda_{2}$ are constant in $y$ and $V_{1},V_{2}$ are functions of $y$ for fixed $z$, $\partial_{z}V_{0}=:p$ and $\beta$,   such that
\begin{itemize}
    \item The PDE problem solved by $(\lambda_{2}, V_{2}(\cdot))$ is
    \begin{equation}\label{HJB 2}
    -(A_{2}y + C_{2} + B_{2}\beta)\cdot\partial_{y}V_{2}  = \lambda_{2}(z,\beta), \quad \text{for } y\in \mathds{T}^{n}.
\end{equation}
    \item The PDE problem solved by $(\lambda_{1}, V_{1}(\cdot))$ is
    \begin{equation}\label{HJB 1}
    \begin{aligned}
        & - A_{1} y \cdot p - (A_{2}y + C_{2})\cdot\partial_{y}V_{1} \\
        & \quad \quad \quad + \sup\limits_{\beta\in \Omega_B}\{ \; -\partial_{y}V_{1}\cdot B_{1}\beta  +\varepsilon \lambda_{2}(z,\beta)  \} = \lambda_{1}(z,p,\varepsilon), \quad \text{for } y\in \mathds{T}^{n}.
    \end{aligned}
    \end{equation}
\end{itemize}
Then the PDE in \eqref{HJB}, complemented with the same initial condition, becomes
\begin{equation}\label{HJB eps}
    \partial_{t} V_{0}  - C_{1}(z)\cdot\partial_{z}V_{0} + \sup\limits_{\alpha\in \Omega_A}\{-\partial_{z}V_{0}\cdot B_{1}(z)\alpha  \} + \lambda_{1}(z,\partial_{z}V_{0},\varepsilon) = \ell(z).
\end{equation}

The results of the previous section are not straightforwardly applicable in this situation. This is mainly due to the presence of $\beta$ in both PDEs \eqref{HJB 2} and \eqref{HJB 1}. Indeed, in order to obtain an asymptotic expansion of $V^{\varepsilon}$, one could start first by solving \eqref{HJB 2} for $\beta$ fixed, then plugging $\lambda_{2}$ in \eqref{HJB 1} and solving the latter PDE (provided $\lambda_{2}$ is continuous in $\beta$). The next step would be to find $\beta^{*}$ for which the $\sup$ is obtained and solve again \eqref{HJB 2} with $\beta^{*}$. Then the resulting function $V_{2}$ is the next term in the asymptotic expansion
\begin{equation}\label{expansion 2}
    V^{\varepsilon}(t,z,y) = {V}_{0}(t,z;\varepsilon) + \varepsilon {V}_{1}(y;\varepsilon) + \varepsilon^{2}V_{2}(y) + O(\varepsilon^{3}),
\end{equation}
where $V_{0}$ solves \eqref{HJB eps} and $V_{1}$ solves \eqref{HJB 1}. \\
\indent To that must be added the other difficulty arising from the ambiguity of the dependence of $V_{0}(t,z;\varepsilon), V_{1}(z;\varepsilon)$ on $\varepsilon$ (see \eqref{HJB 1}-\eqref{HJB eps}). Note that even if there was no control $\beta$ in the slow variables, one would have $\lambda_{1}(z,p,\varepsilon)= \bar{\lambda}_{1}(z,p) + \varepsilon\lambda_{2}(z)$ where $\bar{\lambda}_{1}(z,p)$ is the constant obtained in \eqref{cell}, and $V_{0}, V_{1}$ will still depend on $\varepsilon$ in an unknown fashion.

To bypass these difficulties (mainly, the presence of $\beta$ in \eqref{HJB 2} and the dependence on $\varepsilon$ in \eqref{HJB 1}), and still keep track of $\varepsilon$ in the  asymptotic expansion, we will consider a multi-scale approach. This is the object of the next section.

\section{A Multi-Scale System}\label{sec: multiscale sys}

 \subsection{The three-scale Optimal control problem}\label{sec: linear 2}

The state variables $z,y$ and $x$ will refer to the macro-, meso-, and micro-scale regime respectively, subject to the following system of singularly perturbed and controlled ODEs 
\begin{equation}
    \label{CSP 2}
    \begin{aligned}
        \dot{z}(s) & = A_{0}(z)x + A_{1}(z)y + B_{1}(z)\alpha + C_{1}(z),& z(0) = z_{0}\in \mathds{R}^{m},&\\ 
        \varepsilon\dot{y}(s) & = A_{2}(z)y + B_{2}(z)\beta + C_{2}(z),& y(0) = y_{0}\in\mathds{R}^{n},&\\ 
        \varepsilon^{2}\dot{x}(s) & = A_{3}(z)x + B_{3}(z)\gamma + C_{3}(z),& x(0) = x_{0}\in\mathds{R}^{\ell},&
    \end{aligned}
\end{equation}
where $s\in[0,1]$, $\varepsilon>0$ is a small parameter, $A_{0}(z), A_{i}(z), B_{i}(z), C_{i}(z), i=1,2,3$ are functions
satisfying the standing assumptions \textbf{(SA)} in Section \ref{sec: hjb}, moreover $x(\cdot)$ satisfy the stability and controllability assumptions (as it is assumed for $y(\cdot)$) that is: the matrix $A_{3}(\cdot)$ has all its eigenvalues negative, and $B_{3}(\cdot)$ has full rank. We recall that $y,x$ live on the torus $\mathds{T}^{n}$ and $\mathds{T}^{\ell}$ respectively and that all functions if depending on $y$ or $x$ are periodic in the latter (see Remark \ref{rem periodic}). 

The admissible controls $\alpha(\cdot)$, $\beta(\cdot)$ and $\gamma(\cdot)$ are measurable functions with values in $\Omega_A$, $\Omega_B$ and $\Omega_\Gamma$ respectively, all compact and convex subsets of $\mathds{R}^{p}$, $\mathds{R}^{q}$ and $\mathds{R}^{r}$ respectively. They are chosen  as to minimise the cost functional
\begin{equation}\label{SP 2}
\tag{SP}
    \inf \; \int_{0}^{1} \ell(z(s))\,\text{d}s +  \phi(z(1)),
\end{equation}
subject to the singularly perturbed and controlled trajectories \eqref{CSP 2} with $s\in [0,1]$.

The corresponding HJB equation takes the form
\begin{equation}\label{HJB multi}
\left\{ \; 
\begin{aligned}
    & \partial_{t} V^{\varepsilon} + H\left(z,y,x,D_{z}V^{\varepsilon},\frac{1}{\varepsilon}D_{y}V^{\varepsilon}, \frac{1}{\varepsilon^{2}}D_{x}V^{\varepsilon}\right) = \ell(z),\quad \text{in } \mathds{R}^{m}\times\mathds{R}^{n}\times \mathds{R}^{\ell}\times (0,1),\\
    & V^{\varepsilon}(z,y,x,0) = \phi(z), \text{ in } \mathds{R}^{m}\times\mathds{R}^{n}\times \mathds{R}^{\ell}.
\end{aligned}
\right.
\end{equation}
The Hamiltonian $H$ is 
\begin{equation*}
\begin{aligned}
    & H(z,y,x,p,q,r)\\
    & = \sup\limits_{\alpha\in \Omega_{A},\beta\in \Omega_{B}, \gamma \in \Omega_{\Gamma}}\big\{-p\cdot (A_{0}(z)x+A_{1}(z)y + B_{1}(z)\alpha + C_{1}(z))\\
    & \quad \quad \quad \quad \quad \quad - q\cdot (A_{2}(z)y + B_{2}(z)\beta + C_{2}(z)) - r \cdot(A_{3}(z)x + B_{3}(z)\gamma + C_{3}(z))\big\}\\
    & = H_{1} + H_{2} + H_{3},
\end{aligned}
\end{equation*}
and 
\begin{equation*}
    \begin{aligned}
        H_{1} & = -(A_{0}(z)x + A_{1}(z)y + C_{1}(z))\cdot p + \sup\limits_{\alpha\in \Omega_A}\big\{ -p\cdot B_{1}(z)\alpha  \big \},\\
        H_{2} & = - (A_{2}(z)y  + C_{2}(z))\cdot q + \sup\limits_{\beta\in \Omega_B}\big\{ -q\cdot B_{2}(z)\beta \big \},\\
        H_{3} & = - (A_{3}(z)x + C_{3}(z))\cdot r + \sup\limits_{\gamma\in \Omega_\Gamma}\big\{ -r\cdot B_{3}(z)\gamma \big \}. 
    \end{aligned}
\end{equation*}
The term $H_{1}$ is the contribution of the macro-scale. The term $H_{2}$ is the contribution of the meso-scale. The term $H_{3}$ is the contribution of the micro-scale. 

We now consider the following asymptotic expansion.

\noindent \textbf{Ansatz. } $V^{\varepsilon}(t,z,y,x) = V_{0}(t,z) + \varepsilon V_{1}(y) + \varepsilon^{2} V_{2}(x) + O(\varepsilon^{3})$. 

This formally leads to three HJB equations, each one for a different scale: 
\begin{itemize}
    \item \textit{(The micro-scale)} we freeze $\bar{z},\; \bar{y}, \; \bar{p}:=\partial_{z}V_{0}(\bar{z}), \;  \bar{q}:=\partial_{y}V_{1}(\bar{y})$, and we solve the PDE whose unknown is the pair $(\lambda_{2}, V_{2}(\cdot))$ where $\lambda_{2}$ is a constant in $x$ (depending on $\bar{z}$ and on $\bar{p}$) and $V_{2}(\cdot)$ is a function of $x$ such that
    \begin{equation}\label{micro}
    \begin{aligned}
        & -(A_{3}(\bar{z})x+C_{3}(\bar{z}))\cdot \partial_{x}V_{2}(x) \\
        & \quad \quad \quad \quad + \sup\limits_{\gamma\in \Omega_\Gamma}\{-\partial_{x}V_{2}(x)\cdot B_{3}(\bar{z})\gamma\} - A_{0}(\bar{z})x\cdot \bar{p} = \lambda_{2},\quad \forall\, x.
    \end{aligned}
    \end{equation}
    \item \textit{(The meso-scale)} we freeze $\bar{z}, \; \bar{p}:=\partial_{z}V_{0}(\bar{z})$ and solve the PDE whose unknown is the pair $(\lambda_{1}, V_{1}(\cdot))$ where $\lambda_{1}$ is a constant in $y$ (depending on $\bar{z}$ and on $\bar{p}$) and $V_{1}(\cdot)$ is a function of $y$ such that
    \begin{equation}\label{meso}
    \begin{aligned}
        & - (A_{2}(\bar{z})y  + C_{2}(\bar{z}))\cdot \partial_{y}V_{1}(y) \\
        & \quad \quad \quad \quad + \sup\limits_{\beta\in \Omega_B}\big\{ -\partial_{y}V_{1}(y)\cdot B_{2}(\bar{z})\beta \big\} -A_{1}(\bar{z})y\cdot \bar{p}= \lambda_{1},\quad \forall\, y.
    \end{aligned}
    \end{equation}
    This is analogous to \eqref{cell}.
    \item \textit{(The macro-scale)} we solve the PDE whose unknown is the function $V_{0}$ and is the unique continuous viscosity solution in $\mathds{R}^{m}\times (0,1)$ of
    \begin{equation}\label{eff HJB multi}
    \left\{ \; 
    \begin{aligned}
        & \partial_{t}V_{0}(t,z) - C_{1}(z)\cdot \partial_{z}V_{0}(t,z) + \sup\limits_{\alpha\in \Omega_A}\big\{ -\partial_{z}V_{0}(t,z)\cdot B_{1}\alpha \big\}\\
        & \quad \quad \quad \quad \quad \quad \quad \quad \quad \quad  + \lambda_{1}(z,\partial_{z}V_{0}(t,z)) + \lambda_{2}(z,\partial_{z}V_{0}(t,z)) = \ell(z),\\
        & V_{0}(z,0) = \phi(z), \text{ in } \mathds{R}^{m}.
    \end{aligned}
    \right.
    \end{equation}
\end{itemize}

The PDE in \ref{eff HJB multi} is the one obtained when $\varepsilon\to 0$ in \eqref{HJB multi} (compare with \eqref{HJB 0}). To prove the convergence, we consider a cascaded approach as described in \cite[\S 4.1]{alvarez2008multiscale} (see also \cite{alvarez2007multiscale}). It goes as follows:

\begin{enumerate}[label=(\roman*)]
    \item Firstly, we consider the dynamics of $x$ being the only \textit{fast} one, while keeping frozen the dynamics of $z,y$. In order to pass to the limit, we solve \eqref{micro}, and obtain $(\lambda_{2}),V_{2})$ as we have previously done in Section \S \ref{sec: first order asymptot}. This is needed for the construction of the PDE problem in the last step. 
    \item Secondly, we freeze $z$ and  consider $y$ to be the \textit{fast} dynamics (note that at this stage, we have a system of $(z,y)$ only). We need now to solve \eqref{meso}, and obtain $(\lambda_{1},V_{1})$. This is the approach in Section \S \ref{sec: first order asymptot}. 
    \item Lastly, we consider the PDE \eqref{eff HJB multi} with $\lambda_{1}$ and $\lambda_{2}$ previously constructed, and  we can pass to the limit again as in \S \ref{sec: first order asymptot}.
\end{enumerate}
In other words, because the dynamics of the microscopic variable $x$ satisfies the same assumptions as of the one of the mesoscopic variable $y$, the results of the previous section apply so we can pass to the limit in the step (i) and recover a mesoscopic limit problem whose two-scaled variables are now $(z,y)$. The latter falls again in the framework of the two-scaled optimal control problem discussed previously. Thus we can repeat the same procedure to finally get the limit (macroscopic, or effective) problem whose solution is $V_{0}$ and the convergence of $V^{\varepsilon}$ to $V_{0}(t,z)$ is a consequence of \cite[Theorem 4.1]{alvarez2008multiscale}. 

\begin{thm}\label{thm: conv multi}
    With the standing assumptions (SA), the following statements hold.\vspace*{-3mm}
    \begin{enumerate}
        \item Let $z,y,p,q$ be fixed. There exists a unique $\lambda_{2}$ for which there exists a continuous periodic viscosity solution $V_{2}(\cdot)$ to \eqref{micro}.
        \item Let $z,p$ be fixed. There exists a unique $\lambda_{1}$ for which there exists a continuous periodic viscosity solution $V_{1}(\cdot)$ to \eqref{meso}. 
        \item As $\varepsilon\to 0$, the sequence $V^{\varepsilon}$ of solutions of \eqref{HJB multi} converges locally uniformly on $\mathds{R}^{m}\times (0, 1]$ to \eqref{eff HJB multi}. 
    \end{enumerate}
\end{thm}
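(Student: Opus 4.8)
The plan is to derive parts (1) and (2) directly from the single cell-problem result of Lemma \ref{lem: corrector}, and then to obtain the convergence in part (3) by peeling off the scales one at a time, as outlined in steps (i)--(iii) preceding the statement.

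For part (2), I note that the meso-scale equation \eqref{meso} is exactly the cell problem \eqref{cell} in the fast variable $y$, with data $(A_2,B_2,C_2)$ and forcing $-A_1(\bar z)y\cdot\bar p$. Since $A_2$ satisfies the stability bound \eqref{assumption fast} and $B_2$ has full rank, the controllability \eqref{strong controllability} holds, and Lemma \ref{lem: corrector} applies verbatim to yield the unique ergodic constant $\lambda_1$ together with a continuous periodic corrector $V_1$ on $\mathds{T}^n$. For part (1), the micro-scale equation \eqref{micro} has the identical structure in the fast variable $x$, with data $(A_3,B_3,C_3)$ and forcing $-A_0(\bar z)x\cdot\bar p$; by assumption $A_3$ has eigenvalues with negative real part and $B_3$ has full rank, so the same small-time controllability holds on $\mathds{T}^\ell$ and \cite[Proposition 6.2]{alvarez2010ergodicity} again provides the unique $\lambda_2$ and a continuous periodic $V_2$. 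In both cases I would also record that $\lambda_1=\lambda_1(z,p)$ and $\lambda_2=\lambda_2(z,p)$ depend continuously on the frozen data, a standard consequence of the stability of the ergodic constant under continuous perturbations of the cell Hamiltonian; this continuity is what lets \eqref{eff HJB multi} be read as a genuine HJB equation.

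For part (3), the plan is to exploit the strict separation $\varepsilon^2\ll\varepsilon\ll 1$ and average in cascade. I would first freeze $(z,y)$ and regard $x$ as the only fast variable, relaxing at rate $\varepsilon^{-2}$; setting $\delta:=\varepsilon^2$ this is precisely a two-scale problem to which Theorem \ref{thm conv} applies, so $H_3$ is replaced by the micro-effective contribution $\lambda_2$ and the term $\varepsilon^{-2}D_xV^\varepsilon$ is eliminated, leaving a two-scale problem in $(z,y)$ with $y$ fast at rate $\varepsilon^{-1}$. A second application of Theorem \ref{thm conv} to this reduced problem averages out $y$ through $\lambda_1$ and produces the macroscopic equation \eqref{eff HJB multi}. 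That this iterated limit coincides with the genuine simultaneous limit $\varepsilon\to 0$ is exactly the content of the cascaded convergence result \cite[Theorem 4.1]{alvarez2008multiscale} (see also \cite{alvarez2007multiscale}), whose hypotheses---uniform continuity and linear growth of the data from (SA), compactness of the fast tori $\mathds{T}^n,\mathds{T}^\ell$, and controllability of each fast subsystem---are all in force here.

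The main obstacle I anticipate is not the existence of the correctors but the verification that the cascaded convergence theorem of \cite{alvarez2008multiscale} applies in the correctly scale-ordered sense. Concretely, after averaging out $x$ one must check that the micro-effective Hamiltonian inherits the structural properties (continuity, and enough regularity for a comparison principle) required to run the second averaging in $y$, and that $\lambda_1,\lambda_2$ depend continuously---with uniformity in the slow data---on $(z,p)$, so that the limiting equation \eqref{eff HJB multi} admits a comparison principle and hence a unique continuous viscosity solution. The delicate point is precisely the stability of this tower of cell problems under the strict ordering $\varepsilon^2\ll\varepsilon$, which is what guarantees that the fastest variable has equilibrated before the intermediate one is averaged; this is where the machinery of \cite{alvarez2007multiscale,alvarez2008multiscale} is indispensable and where a naive one-step argument would break down.
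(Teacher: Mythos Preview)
Your proposal is correct and follows essentially the same approach as the paper: parts (1) and (2) are reduced to Lemma \ref{lem: corrector} by observing that \eqref{micro} and \eqref{meso} have the identical structure as the cell problem \eqref{cell} with the relevant stability and full-rank assumptions in place, and part (3) is obtained by the cascaded averaging of \cite[Theorem 4.1]{alvarez2008multiscale}. The paper's own proof is in fact much terser than yours---it simply cites Lemma \ref{lem: corrector} for (1)--(2) and \cite[Theorem 4.1]{alvarez2008multiscale} for (3)---so your additional remarks on the continuity of $\lambda_1,\lambda_2$ and on verifying the hypotheses of the cascaded convergence theorem are useful elaborations rather than departures.
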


Here, statement $(1)$ of the theorem concerns the point (i) above, statement $(2)$ is for (ii), and statement $(3)$ is for (iii). 

\begin{proof}
    The first and second statements are the same as Lemma \ref{lem: corrector}. The last statement is analog to Theorem \ref{thm conv} and is a consequence of \cite[Theorem 4.1]{alvarez2008multiscale}. 
    The convergence results (of the dynamics and the value function) hold for all the initial positions of the dynamics $y,x$, and uniformly on compact sets for $z$. This is ensured by the stability assumption.
\end{proof}

\begin{rem}
    We note that this cascaded approach is not only valid for a three-scale system, but can be performed for systems with multiple time-scales $\varepsilon, \varepsilon^{2}, \varepsilon^{3}, \dots \;$ . Indeed for a system with $N$ time-scale, it suffices to freeze all the variables until the $(N-1)$ time-scale, solve the last equation to obtain $(\lambda_{N-1}, V_{N-1}(\cdot))$. Then we solve the next one and obtain $(\lambda_{N-2},V_{N-2}(\cdot))$, and so on. Until the last PDE analogous to \eqref{eff HJB multi} where we would have the additional terms $\lambda_{1},\dots,\lambda_{N-1}$.
\end{rem}

\section{Application}\label{Sec: Application}

\subsection{Jin--Xin Two Scale Relaxation}

We start by recalling the relaxation due to Jin and Xin \cite{MR1322811,MR1793199,MR1693210}. The original PDE problem is
\begin{equation}\label{original}
\left\{\quad
    \begin{aligned}
        & \partial_{t}u + \partial_{x} (\mathcal{F}(u)) = 0,\\
        & u(x,0)=u_{0}(x).
    \end{aligned}
\right.
\end{equation}
A new variable $v$ is introduced and yields the following system of PDEs, for $a>0$,
\begin{equation}\label{relax JX}
\left\{\quad
\begin{aligned}
    & \partial_{t}u + \partial_{x} v = 0,\\
    & \partial_{t}v + a\,\partial_{x} u = -\frac{1}{\varepsilon}(v-\mathcal{F}(u)),\\
    & u(x,0)=u_{0}(x),\; v(x,0)=\mathcal{F}(u_{0}(x)).
\end{aligned}
\right.
\end{equation}
This is motivated by the small relaxation limit ($\varepsilon\to 0$) for which one recovers a local equilibrium $v=\mathcal{F}(u)$ and hence the original PDE problem. 

In order to study this convergence, we rewrite this relaxation by introduce another variable $\omega$ subject to the PDE
\begin{equation*}
\left\{\quad
    \begin{aligned}
        & \partial_{t}\omega = -\frac{1}{\varepsilon}(v-\mathcal{F}(u)),\\
        & \omega(x,0)=\omega_{0}(x),
    \end{aligned}
\right.
\end{equation*}
where $\omega_0$ will later be discussed. Then we define $\nu := v-\omega$ and write the system of PDEs satisfied by $(u,\nu,\omega)$ that is
\begin{equation}\label{eq: u nu oemga}
\left\{\quad
    \begin{aligned}
        & \partial_{t}u = - \partial_{x} \nu - \partial_{x}\omega, \quad && u(x,0)=u_{0}(x), \\
        & \partial_{t} \nu = - a\,\partial_{x} u , \quad && \nu(x,0)=\mathcal{F}(u_{0}(x)) -  \omega_{0}(x),\\
        & \partial_{t}\omega = -\frac{1}{\varepsilon}( \nu-\mathcal{F}(u) +\omega), \quad && \omega(x,0) = \omega_{0}(x).
    \end{aligned}
\right.
\end{equation}
The system satisfied by $(u,\nu,\omega)$ has the advantage of being \textit{singularly perturbed} in time only. This structure is easier to study, at least in the finite-dimensional setting which we discuss in the sequel. Note moreover that the system \eqref{eq: u nu oemga} still enjoys hyperbolicity. We have indeed
\begin{equation}\label{eq operator A}
    \partial_{t}
    \begin{pmatrix}
    u\\ \nu \\ \omega 
    \end{pmatrix}
    +
    \mathcal{A} \, 
    \partial_{x}
    \begin{pmatrix}
    u\\ \nu \\ \omega 
    \end{pmatrix}
    =  S(u,\nu,\omega),
\end{equation}
where 
\begin{equation}\label{eq: A and S}
    \mathcal{A} = 
    \begin{pmatrix}
        0 & 1 & 1 \\
        a & 0 & 0 \\
        0 & 0 & 0
    \end{pmatrix} 
    \quad 
    \text{ and }
    \quad 
    S(u,\nu,\omega) = 
    \begin{pmatrix}
       0 \\ 
       0 \\ 
       -\frac{1}{\varepsilon}( \nu-\mathcal{F}(u) +\omega) \\
    \end{pmatrix}.
\end{equation}
Note that $\mathcal{A}$ has the eigenvalues $\{0,\,  \sqrt{a}, -\sqrt{a}\}$ corresponding to the eigenvectors $(0, \, -1, \, 1)^{\top}$, $(a^{-1/2}, \, 1, \, 0)^{\top}$ and $(- a^{-1/2}, \, 1, \, 0)^{\top}$ respectively. It can then be expressed such that $\mathcal{A} = T\Lambda T^{-1}$ where
\begin{equation*}
    T = 
    \begin{pmatrix}
        0 & \sqrt{a} & -\sqrt{a}\\
        -1 & 1 & 1 \\
        1 & 0 & 0
    \end{pmatrix}, 
    \quad 
    \Lambda = 
    \begin{pmatrix}
        0 & 0 & 0 \\
        0 & \sqrt{a} & 0\\
        0 &  0 & -\sqrt{a}
    \end{pmatrix},
    \quad 
    T^{-1} = 
    \begin{pmatrix}
        0 & 0 & 1\\
        \sqrt{a}/2 & 1/2 & 1/2\\
        -\sqrt{a}/2 & 1/2 & 1/2
    \end{pmatrix}.
\end{equation*}
Therefore we can write \eqref{eq operator A} in the form
\begin{equation}\label{eq operator A 2}
    \partial_{t} T^{-1}
    \begin{pmatrix}
    u\\ \nu \\ \omega 
    \end{pmatrix}
    +
    \Lambda \, 
    \partial_{x} T^{-1}
    \begin{pmatrix}
    u\\ \nu \\ \omega 
    \end{pmatrix}
    =  T^{-1}S(u,\nu,\omega).
\end{equation}
Finally, setting 
\begin{equation*}
    \xi = 
    \begin{pmatrix}
    \xi_{1}\\ \xi_{2} \\ \xi_{3}    
    \end{pmatrix} 
    :=  T^{-1} 
    \begin{pmatrix}
        u\\ \nu \\ \omega
    \end{pmatrix}
    =
    \begin{pmatrix}
        \omega\\
        \frac{\sqrt{a}}{2} u + \frac{1}{2}(\nu + \omega)\\
        -\frac{\sqrt{a}}{2} + \frac{1}{2}(\nu + \omega)
    \end{pmatrix}
    \quad \text{ and } \quad
    \bar{S}(\xi) := T^{-1} S(u,\nu,\omega),
\end{equation*}
yields the following equivalent system to \eqref{eq operator A}
\begin{equation}
    \left\{\quad 
    \begin{aligned}
        & \partial_{t} \xi_{1}  =     \bar{S}_{1}(\xi),\\
        & \partial_{t} \xi_{2} + \sqrt{a}\, \partial_{x}\,\xi_{2} = \bar{S}_{2}(\xi),\\
        & \partial_{t} \xi_{3} - \sqrt{a}\, \partial_{x}\,\xi_{3} = \bar{S}_{3}(\xi).
    \end{aligned}
    \right.
\end{equation}


We now consider a semi-discretisation in space of the PDE system \eqref{eq: u nu oemga} satisfied by $(u,\nu,\omega)$. We introduce $\mathbf{u}_{\varepsilon}(\cdot), \mathbf{v}_{\varepsilon}(\cdot), \mathbf{w}_{\varepsilon}(\cdot)\in \mathds{R}^{m}$ satisfying the system of ODEs, for $s\in [0,T]$ and $T<+\infty$
\begin{equation*}
\left\{\quad
    \begin{aligned}
        & \dot{\mathbf{u}}_{\varepsilon}(s) = -D\mathbf{v}_{\varepsilon}(s) -D\mathbf{w}_{\varepsilon}(s), \quad && \mathbf{u}_{\varepsilon}(0) = \mathbf{u}_0\in \mathds{R}^{m}, \\
        & \dot{\mathbf{v}}_{\varepsilon}(s) = -a\, D \mathbf{u}_{\varepsilon}(s), \quad && \mathbf{v}_{\varepsilon}(0)=\mathbf{v}_0\in \mathds{R}^{m}, \\
        & \dot{\mathbf{w}}_{\varepsilon}(s) = -\frac{1}{\varepsilon}\big[ \mathbf{v}_{\varepsilon}(s)-\mathcal{F}(\mathbf{u}_{\varepsilon}(s)) +\mathbf{w}_{\varepsilon}(s)\big], \quad && \mathbf{w}_{\varepsilon}(0) = \mathbf{w}_0\in \mathds{R}^{m}.
    \end{aligned}
\right. 
\end{equation*}
We denote by $D$,  e.g. a first-order finite-volume spatial discretisation of the transport operator $\partial_x$. Its specific structure is for the following considerations not relevant.  The $i$-th component of the vectors $\mathbf{u}_{\varepsilon}(\cdot), \mathbf{v}_{\varepsilon}(\cdot), \mathbf{w}_{\varepsilon}(\cdot)\in \mathds{R}^{m}$ are the values of the solution $(u,v,w)$ at the cell centres $x_i = i \Delta x$ for some spatial grid $\Delta x>0.$ For details on the discretisation we refer to \cite{MR1322811}. The previous system falls within the general theory of \textit{singular perturbation}, or singularly perturbed system of ODEs for which one distinguishes between the \textit{slow} dynamics (here $\mathbf{u}_{\varepsilon}$ and $\mathbf{v}_{\varepsilon}$) and the \textit{fast} dynamics (here $\mathbf{w}_{\varepsilon}$). 

Furthermore, we introduce control parameters $\alpha(s)\in \Omega_A$ and $\beta(s)\in \Omega_B$,  with $\Omega_{A}\subset \mathds{R}^{p}, \Omega_{B} \subset  \mathds{R}^{q}$ are compact and convex and $p,q>0$, as follows
\begin{equation}\label{ODE eps app}
\left\{\;
    \begin{aligned}
        & \dot{\mathbf{u}}_{\varepsilon}(s) = -D\mathbf{v}_{\varepsilon}(s) -D\mathbf{w}_{\varepsilon}(s) + \mathds{H}(\mathbf{u}_{\varepsilon}(s))\alpha(s),  && \mathbf{u}_{\varepsilon}(0) = \mathbf{u}_0, \\
        & \dot{\mathbf{v}}_{\varepsilon}(s) = -a\, D \mathbf{u}_{\varepsilon}(s) , && \mathbf{v}_{\varepsilon}(0)=\mathbf{v}_0, \\
        & \dot{\mathbf{w}}_{\varepsilon}(s) = -\frac{1}{\varepsilon}\big[ \mathbf{v}_{\varepsilon}(s)-\mathcal{F}(\mathbf{u}_{\varepsilon}(s)) - \mathds{G}(\mathbf{u}_{\varepsilon}(s))\beta(s) +\mathbf{w}_{\varepsilon}(s)\big],  && \mathbf{w}_{\varepsilon}(0) = \mathbf{w}_0,
    \end{aligned}
\right.
\end{equation}
where $\mathbf{u}_0,\mathbf{v}_0,\mathbf{w}_0\in \mathds{R}^{m}$, $\mathds{H}$ and $\mathds{G}$ are matrices functions which multiply the control parameters $\alpha,\beta$. 

\begin{rem}
    In the notation of \eqref{eq operator A}, this corresponds to adding $\mathds{H}(u)\alpha$ and $\frac{1}{\varepsilon}\mathds{G}(u)\beta$ to the first and third entries of $S(u,\nu,\omega)$ in \eqref{eq: A and S} respectively.
\end{rem}

These control parameters will be subject to minimising a given cost function 
\begin{equation}\label{eq: value eps JX}
\tag{SP.1}
    \inf \; \Phi(\mathbf{u}_{\varepsilon}(T)),\quad \text{s.t. } \; \eqref{ODE eps app}\,,
\end{equation}
where we assume the cost $\Phi$  to be continuous. The limit $\varepsilon\to 0$  yields the reduced control problem
\begin{equation}\label{eq: value eff JX}
\tag{R.1}
    \inf \; \Phi(\mathbf{u}(T)),\quad \text{s.t. } \; \eqref{ODE eff app}\,,
\end{equation}
where the reduced dynamics is
\begin{equation}\label{ODE eff app}
\left\{\;
\begin{aligned}
     & \dot{\mathbf{u}}(s) + D\big[\mathcal{F}(\mathbf{u}(s))+\mathds{G}(\mathbf{u}(s))\beta(s)\big] = \mathds{H}(\mathbf{u}(s))\alpha(s),\quad s\in (0,T],\\
     & \mathbf{u}(0) = u_{0}\in\mathds{R}^{m},
\end{aligned}
\right.
\end{equation}
together with
\begin{equation}\label{eq: v eff 1}
    \dot{\mathbf{v}}_{\varepsilon}(s) = -a\, D \mathbf{u}_{\varepsilon}(s) , \quad \quad \mathbf{v}_{\varepsilon}(0)=\mathbf{v}_0.
\end{equation}
Since in the latter control problem only the function $\mathbf{u}$ is involved,  we can omit the dynamics for $\mathbf{v}$. 

The ODE \eqref{ODE eff app} is obtained by first solving for $\mathbf{w}_{\varepsilon}$ the algebraic equation
\begin{equation*}
    0= \mathbf{v}_{\varepsilon}(s)-\mathcal{F}(\mathbf{u}_{\varepsilon}(s)) - \mathds{G}(\mathbf{u}_{\varepsilon}(s))\beta(s) +\mathbf{w}_{\varepsilon}(s),
\end{equation*}
then substituting the value of $\mathbf{w}_{\varepsilon}$ in the dynamics for $\mathbf{u}_{\varepsilon}$. 

Note that \eqref{ODE eff app} is the discretisation of 
\begin{equation}\label{eq: original pde}
\left\{\quad
    \begin{aligned}
        & \partial_{t}u(x,t) + \partial_{x}[\mathcal{F}(u(x,t)) + \mathds{G}(u(x,t))\beta(x,t)] = \mathds{H}(u(x,t))\alpha(x,t),\\
        & u(x,0)=u_{0}(x),
    \end{aligned}
\right.
\end{equation}
which is the controlled version of the original PDE problem \eqref{original} with the additional control terms arising from $\mathds{G}$ and $\mathds{H}$. 

The limit as $\varepsilon\to 0$ is summarised in the following result whose proof is a direct application of Theorem \ref{thm: conv G}.

\begin{cor}\label{cor: JX}
    As $\varepsilon\to 0$, the problem \eqref{eq: value eps JX} is approximated by \eqref{eq: value eff JX} in the sense of Definition \ref{def:approx}. In particular, the controlled system \eqref{ODE eps app} with any given initial conditions converges to \eqref{ODE eff app} locally uniformly on any finite time interval, and we have $\inf\, \Phi(\mathbf{u}_{\varepsilon}(T)) \to \inf \, \Phi(\mathbf{u}(T))$. 
\end{cor}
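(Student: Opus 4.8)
The plan is to recast the controlled Jin--Xin system \eqref{ODE eps app} into the abstract singularly perturbed form \eqref{CSP} and then invoke Theorem \ref{thm: conv G} essentially verbatim. First I would designate the pair $(\mathbf{u}_\varepsilon,\mathbf{v}_\varepsilon)$ as the slow variable $z$ and $\mathbf{w}_\varepsilon$ as the fast variable $y$. Reading off the right-hand sides of \eqref{ODE eps app}, the identification is
\begin{equation*}
A_{1} = \begin{pmatrix} -D \\ 0 \end{pmatrix}, \quad B_{1}(z) = \begin{pmatrix} \mathds{H}(\mathbf{u}) \\ 0 \end{pmatrix}, \quad C_{1}(z) = \begin{pmatrix} -D\mathbf{v} \\ -a\,D\mathbf{u} \end{pmatrix},
\end{equation*}
for the slow block, and $A_{2} = -I$, $B_{2}(z) = \mathds{G}(\mathbf{u})$, $C_{2}(z) = \mathcal{F}(\mathbf{u}) - \mathbf{v}$ for the fast block. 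The Mayer cost $\Phi(\mathbf{u}_\varepsilon(T))$ depends only on the $\mathbf{u}$-component of $z$, so it fits \textbf{(a.9)} as a continuous function of $z$, and the horizon $[0,T]$ is rescaled to $[0,1]$ exactly as in the reformulation remarks of Section \ref{sec: hjb}.

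The second step is to verify that the standing assumptions (SA) hold under this identification. The crucial simplification is that $A_{2} = -I$, so $\|e^{A_{2}t}\| = e^{-t}$ and the stability condition \eqref{assumption fast} holds with $\lambda = 1$ uniformly in $z$; this is automatic and needs no hypothesis on $\mathcal{F}$. The full-rank requirement \textbf{(SA.6)} reduces to the full-rank of $B_{2} = \mathds{G}(\mathbf{u})$, which is precisely the structural assumption one imposes on the control term. The remaining regularity \textbf{(SA.4)} (local Lipschitz continuity with linear growth) transfers to the corresponding hypotheses on $\mathcal{F},\mathds{H},\mathds{G}$; since $D$ is a fixed matrix, $A_{1}$ is constant and $C_{1}$ is linear in $z$, posing no difficulty.

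Third, I would solve the static equation \eqref{static}, which here reads $0 = -\mathbf{w} + \mathds{G}(\mathbf{u})\beta + \mathcal{F}(\mathbf{u}) - \mathbf{v}$ and yields the unique root $\psi(z,\beta) = \mathcal{F}(\mathbf{u}) + \mathds{G}(\mathbf{u})\beta - \mathbf{v}$ (uniqueness because $A_{2}=-I$ is invertible, as in \cite[Lemma 3.1]{gaitsgory1992suboptimization}). Substituting $\psi$ into the $\mathbf{u}$-row of the slow block cancels the two $D\mathbf{v}$ contributions and produces exactly the reduced dynamics \eqref{ODE eff app}, while the $\mathbf{v}$-row gives the companion equation \eqref{eq: v eff 1}; since $\Phi$ does not see $\mathbf{v}$, the latter may be omitted. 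With the assumptions (A) verified and the reduced problem recognised as \eqref{eq: value eff JX}, Theorem \ref{thm: conv G} yields that \eqref{eq: value eps JX} is approximated by \eqref{eq: value eff JX} in the sense of Definition \ref{def:approx}, whence $\inf \Phi(\mathbf{u}_\varepsilon(T)) \to \inf \Phi(\mathbf{u}(T))$; the locally uniform convergence of trajectories for arbitrary initial data follows from the stability assumption as recorded in Remark \ref{rem periodic}.

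The only genuine obstacle is bookkeeping rather than analysis: one must check that the hypotheses on $\mathcal{F},\mathds{H},\mathds{G}$ (continuity, local Lipschitz regularity, linear growth, and full rank of $\mathds{G}$) are exactly what (SA) demands once the block identification $z=(\mathbf{u},\mathbf{v})$, $y=\mathbf{w}$ is fixed, and that composing with the fixed discretisation matrix $D$ preserves the linear-growth bound. Everything else is a direct transcription of Theorem \ref{thm: conv G}.
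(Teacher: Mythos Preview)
Your proposal is correct and mirrors the paper's own argument: the paper states that the corollary is ``a direct application of Theorem \ref{thm: conv G}'' and gives no further detail, so your explicit identification of $z=(\mathbf{u},\mathbf{v})$, $y=\mathbf{w}$, the matrices $A_i,B_i,C_i$, the stability $A_2=-I$, and the computation of the root $\psi$ leading to \eqref{ODE eff app} is exactly the verification that this application goes through. The only cosmetic remark is that Theorem \ref{thm: conv G} invokes assumptions (A) rather than (SA), but your discussion covers the relevant items in either list.
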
 

Moreover, the convergence in the above corollary extends to the HJB equations whose solutions are the value functions of \eqref{eq: value eps JX} and \eqref{eq: value eff JX} respectively, in the sense of Theorem \ref{thm conv}.

\subsection{Jin--Xin Three--Scale Relaxation}\label{sec: JX 3 app}

In this section, we would like to go one step further in the analysis by considering a multi-scale approximation of \eqref{original} and \eqref{relax JX}. Assuming a decomposition
\begin{equation}
\label{eq: F decomp}
    \mathcal{F}(u) \approx F_{0}(u) + \varepsilon\mathcal{F}_{1}(u),
\end{equation} 
 the  following relaxation system for $a,b>0$ is obtained 
\begin{equation}\label{relax JX 2}
\left\{\quad
\begin{aligned}
    & \partial_{t}u + \partial_{x} v = 0,\\
    & \partial_{t}v + a\,\partial_{x} u = -\frac{1}{\varepsilon}\big[ v-(\mathcal{F}_{0}(u) + \varepsilon \omega)\big],\\
    & \partial_{t}\omega + b\,\partial_{x} u = -\frac{1}{\varepsilon^{2}}\big[\omega-\mathcal{F}_{1}(u)\big],\\
    & u(x,0)=u_{0}(x),\; v(x,0)=\mathcal{F}_{0}(u_{0}(x)), \; \omega(x,0) = \mathcal{F}_{1}(u_{0}(x)).
\end{aligned}
\right.
\end{equation}
This system exhibits three scales: the first PDE is in the \textit{macro-scale}, the second one is in  the \textit{meso-scale}, the third one is in the \textit{micro-scale}. In the latter PDE, the local equilibrium is reached for $\omega = \mathcal{F}_{1}(u)$ and yields the term $F_{0}(u) + \varepsilon\mathcal{F}_{1}(u)$ in the second PDE. 

Our goal is to capture the effect of $\mathcal{F}_{0}$ and $\mathcal{F}_{1}$ from \eqref{eq: F decomp} on the value function of the control problem for the finite-dimensional (discrete) version of \eqref{relax JX 2}.

Following the same idea as in the previous subsection, we introduce new variables $p(x,t),q(x,t)$ whose time derivatives correspond to the right-hand side of the second and third equations. Starting from
\begin{equation*}
    \partial_{t}v + a\,\partial_{x} u = -\frac{1}{\varepsilon}\big[ v-(\mathcal{F}_{0}(u) + \varepsilon \omega)\big] = -\frac{1}{\varepsilon}\big[ v -\mathcal{F}_{0}(u) \big] + \omega,
\end{equation*}
we set $\; \partial_{t} p = -\frac{1}{\varepsilon}\big[ v -\mathcal{F}_{0}(u) \big],$
then obtain $\; \partial_{t}v + a\,\partial_{x} u = \partial_{t} p + \omega,$
which is
\begin{equation*}
    \partial_{t}[v-p] = - a\,\partial_{x} u + \omega.
\end{equation*}
Similarly, we start by setting $\, \partial_{t} q = -\frac{1}{\varepsilon^{2}}\big[\omega-\mathcal{F}_{1}(u)\big],$
then get $\, \partial_{t}[\omega-q] =- b\,\partial_{x} u.$
Ultimately, using $\nu := v-p$ and $\text{w} := \omega-q$ yields the following three--scale system
\begin{equation}\label{eq: multi scale}
\left\{\quad
    \begin{aligned}
        & \partial_{t}u = - \partial_{x} \nu - \partial_{x}p, \quad && u(x,0)=u_{0}(x), \\
        & \partial_{t} \nu = - a\,\partial_{x} u + \text{w} + q, \quad && \nu(x,0)= \mathcal{F}_{0}(u_{0}(x))  - p_{0}(x),\\
        & \partial_{t}\text{w} = -b\, \partial_{x} u, \quad && \text{w}(x,0) = \mathcal{F}_{1}(u_{0}(x)) - q_{0}(x),\\
        & \partial_{t}p = -\frac{1}{\varepsilon}(\nu - \mathcal{F}_{0}(u) + p), \quad && p(x,0) = p_{0}(x),\\
        & \partial_{t}q = -\frac{1}{\varepsilon^{2}}(\text{w} - \mathcal{F}_{1}(u) + q), \quad  && q(x,0) = q_{0}(x).
    \end{aligned}
\right.
\end{equation}
This system enjoys hyperbolicity as it can be expressed such that 
\begin{equation}\label{eq operator A tilde}
    \partial_{t}
    \begin{pmatrix}
        u \\\nu \\ \text{w} \\ p \\ q
    \end{pmatrix}
    +
    \widetilde{\mathcal{A}}\;
    \partial_{x}
    \begin{pmatrix}
        u \\ \nu \\ \text{w} \\ p \\ q
    \end{pmatrix} 
    = \widetilde{S}(u,\nu,\text{w}, p, q),
\end{equation}
where 
\begin{equation*}
    \widetilde{\mathcal{A}} = 
    \begin{pmatrix}
        0 & 1 & 0 & 1 & 0 \\
        a & 0 & 0 & 0 & 0\\
        b & 0 & 0 & 0 & 0 \\
        0 & 0 & 0 & 0 & 0
    \end{pmatrix}
    \quad \text{ and } \quad
    \widetilde{S}(u,\nu,\text{w},p , q) = 
    \begin{pmatrix}
        0 \\
        \text{w} + q \\
        0\\
        -\frac{1}{\varepsilon}(\nu - \mathcal{F}_{0}(u) + p) \\
        -\frac{1}{\varepsilon^{2}}(\omega - \mathcal{F}_{1}(u) + q)
    \end{pmatrix}. 
\end{equation*}
Observe that $\widetilde{A}$ has eigenvalues $\{0, 0, 0, \sqrt{a}, -\sqrt{a}\}$ and  eigenvectors $(0,0,1,0,0)^{\top}$, $(0,-1,0, 1,0)^{\top}$, $(0,0,0,0,1)^{\top}$, $(\sqrt{a}/b, a/b,1,0,0)^{\top}$ and $(-\sqrt{a}/b, a/b,1,0,0)^{\top}$ respectively. It can then be expressed as $\widetilde{A} = \widetilde{T} \widetilde{\Lambda} \widetilde{T}^{-1}$ where we skip the details of $\widetilde{T}$. The matrix  $\widetilde{\Lambda}$ is diagonal with three diagonal entries being zero and $\pm \sqrt{a}$. 

Then, we write \eqref{eq operator A tilde} in the form (compare with \eqref{eq operator A 2})
\begin{equation}
    \label{eq operator A tilde 2}
    \partial_{t} \widetilde{T}^{-1} 
    \begin{pmatrix}
        u\\ \nu \\ \text{w} \\ p \\ q
    \end{pmatrix}
    + \widetilde{\Lambda} \, \partial_{x} \widetilde{T}^{-1} 
    \begin{pmatrix}
        u \\ \nu \\ \text{w} \\ p \\ q
    \end{pmatrix}
    = 
    \widetilde{T}^{-1}\widetilde{S}(u,\nu,\text{w},p,q).
\end{equation}
Setting
\begin{equation*}
    \widetilde{\xi} = 
    \begin{pmatrix}
        \tilde{\xi}_{1} \\ \tilde{\xi}_{2} \\ \tilde{\xi}_{3} \\ \tilde{\xi}_{4} \\ \tilde{\xi}_{5}
    \end{pmatrix}
    := \widetilde{T}^{-1}
    \begin{pmatrix}
        u \\ \nu \\ \text{w} \\ p \\ q 
    \end{pmatrix}
    =
    \begin{pmatrix}
        \frac{-b}{a}(\nu +p) + \text{w} \\
        p \\
        q \\
        \frac{b}{2 \sqrt{a}}u + \frac{b}{2a} (\nu + p)\\
        \frac{-b}{2 \sqrt{a}}u + \frac{b}{2 a}(\nu + p)
    \end{pmatrix},
\end{equation*}
yields the following equivalent system to \eqref{eq operator A tilde}
\begin{equation}
    \partial_{t} \widetilde{\xi} + \widetilde{\Lambda} \, \partial_{x}\widetilde{\xi} = \widetilde{T}^{-1} \widetilde{S}(u,\nu,\text{w},p,q).
\end{equation}

The finite-dimensional version of \eqref{eq: multi scale} is the following system of ODEs to which we have added three control parameters, $\alpha(s)\in \Omega_A$, $\beta(s)\in \Omega_B$ and $\gamma(s)\in \Omega_\Gamma$, where $\Omega_{A},\Omega_{B}, \Omega_{\Gamma}$ are three compact and convex sets
\begin{equation}\label{relax 2 eps}
\left\{
    \begin{aligned}
        & \dot{\mathbf{u}}_{\varepsilon}(s) = -D\mathbf{v}_{\varepsilon}(s) -D\mathbf{p}_{\varepsilon}(s) + \mathds{H}(\mathbf{u}_{\varepsilon}(s))\alpha(s), \; && \mathbf{u}_{\varepsilon}(0) = \mathbf{u}_0\in \mathds{R}^{m}, \\
        & \dot{\mathbf{v}}_{\varepsilon}(s) = -a\, D \mathbf{u}_{\varepsilon}(s) + \mathbf{w}_{\varepsilon} + \mathbf{q}_{\varepsilon}, \; && \mathbf{v}_{\varepsilon}(0)=\mathbf{v}_0\in \mathds{R}^{m}, \\
        & \dot{\mathbf{w}}_{\varepsilon}(s) = - b \, D \mathbf{u}_{\varepsilon}(s), \; && \mathbf{w}_{\varepsilon}(0)=\mathbf{w}_0\in \mathds{R}^{m}, \\
        & \dot{\mathbf{p}}_{\varepsilon}(s) = -\frac{1}{\varepsilon}\big[ \mathbf{v}_{\varepsilon}(s)-\mathcal{F}_{0}(\mathbf{u}_{\varepsilon}(s)) -  \mathds{G}(\mathbf{u}_{\varepsilon}(s))\beta(s) +\mathbf{p}_{\varepsilon}(s)\big], \; && \mathbf{p}_{\varepsilon}(0) = \mathbf{p}_0\in \mathds{R}^{m},\\
        & \dot{\mathbf{q}}_{\varepsilon}(s) = -\frac{1}{\varepsilon^{2}}\big[ \mathbf{w}_{\varepsilon}(s)-\mathcal{F}_{1}(\mathbf{u}_{\varepsilon}(s))  - \mathds{K}(\mathbf{u}_{\varepsilon}(s))\gamma(s)+\mathbf{q}_{\varepsilon}(s)\big], \; && \mathbf{q}_{\varepsilon}(0) = \mathbf{q}_0\in \mathds{R}^{m}.
    \end{aligned}
\right.
\end{equation}

The limiting (effective) system as $\varepsilon\to 0$, is again given by \eqref{ODE eff app}. The system \eqref{relax 2 eps} falls within the framework of section \ref{sec: multiscale sys} since it can be written in the form \eqref{CSP 2} as follows
with the dimensions $m=3d$, $n=d$, $\ell=d$, where $d$ is the space dimension (discretisation).  The variables are 
$z=[\textbf{u}_{\varepsilon}^{\top},\textbf{v}_{\varepsilon}^{\top},\textbf{w}_{\varepsilon}^{\top}]^{\top}$, \quad $y=\textbf{p}_{\varepsilon}$, \quad $x=\textbf{q}_{\varepsilon}$.  Let $0_{d}, 1_{d}$ be the $d$-dimensional column vectors whose entries are all $0$ and $1$ respectively. Let also $a,b>0$ be constant parameters and $D$ the discretisation matrix of transport operator as before. Then, we set
\begin{equation*}
    \begin{aligned}
        & A_{0} =
        \begin{pmatrix}
            0_{d} \\ 1_{d} \\ 0_{d}
        \end{pmatrix}, \; 
        A_{1} = 
        \begin{pmatrix}
            -D \\ 0_{d} \\ 0_{d}
        \end{pmatrix}, \;
        B_{1} = 
        \begin{pmatrix}
            \mathds{H}(\mathbf{u}_{\varepsilon})\\ 0_{d} \\ 0_{d}
        \end{pmatrix},\;
        C_{1} = 
        \begin{pmatrix}
            0_{d} & -D & 0_{d} \\
            -a\,D & 0_{d} & 1_{d} \\
            -b\,D & 0_{d} & 0_{d}
        \end{pmatrix}\!\!
        \begin{pmatrix}
        \mathbf{u}_{\varepsilon} \\ \mathbf{v}_{\varepsilon} \\ \mathbf{w}_{\varepsilon}
    \end{pmatrix},\\
    & A_{2} = -1_{d}, \quad
    B_{2} = \mathds{G}(\mathbf{u}_{\varepsilon}),\quad C_{2} = \mathcal{F}_{0}(\mathbf{u}_{\varepsilon}) + 
    \begin{pmatrix}
        0_{d}^{\top} & -1_{d}^{\top} & 0_{d}^{\top}
    \end{pmatrix}\!\!
    \begin{pmatrix}
        \mathbf{u}_{\varepsilon} \\ \mathbf{v}_{\varepsilon} \\ \mathbf{w}_{\varepsilon}
    \end{pmatrix},\\
    & A_{3} = -1_{d}, \quad B_{3} = \mathds{K}(\mathbf{u}_{\varepsilon}), \quad C_{3} = \mathcal{F}_{1}(\mathbf{u}_{\varepsilon}) + 
    \begin{pmatrix}
        0_{d}^{\top} & 0_{d}^{\top} & -1_{d}^{\top} 
    \end{pmatrix}\!\!
    \begin{pmatrix}
        \mathbf{u}_{\varepsilon} \\ \mathbf{v}_{\varepsilon} \\ \mathbf{w}_{\varepsilon}
    \end{pmatrix}.
    \end{aligned}
\end{equation*}

Note that the multiscale approach yields at the limit $\varepsilon\to 0$ the same effective ODE system \eqref{ODE eff app}, the difference being in the dynamics of $\mathbf{v}$ (compare with \eqref{eq: v eff 1}) 
\begin{equation}\label{eq: v eff 2}
    \dot{\mathbf{v}}(s) = - a\, D\mathbf{u}(s) + \mathcal{F}_{1}(\mathbf{u}(s)) + \mathds{K}(\mathbf{u}(s))\gamma(s), \quad \quad \mathbf{v}(0) = \mathbf{v}_{0}.
\end{equation}
and the new variable $\mathbf{w}$ governed by
\begin{equation}\label{eq: w eff 2}
    \dot{\mathbf{w}}(s) = - b \, D \mathbf{u}(s),\quad \quad \mathbf{w}(0) = \mathbf{w}_{0}.
\end{equation}
However, the benefit of using three (or more) scales appears in the asymptotic expansion of the value function which keeps track of the higher-order terms in \eqref{eq: F decomp}. Thus, we make a corollary of Theorem \ref{thm: conv multi} on the convergence at the level of the HJB equations. 

Let $\ell, \phi$ be as in the standing assumptions \textbf{(SA)} of \S \ref{sec: hjb}. Consider the optimal control problem
\begin{equation}\label{SP JX multi}
\tag{SP.2}
    \mathbf{V}^{\varepsilon}(\mathbf{u}_{0}, \mathbf{v}_{0}, \mathbf{w}_{0}, \mathbf{p}_{0}, \mathbf{q}_{0}) = 
    \inf \; \int_{0}^{1} \ell(\mathbf{u}_{\varepsilon}(s))\,\text{d}s +  \phi(z(1)), \; \text{ s.t.: } \;  \eqref{relax 2 eps} \text{ with } s\in [0,1].
\end{equation}
And consider the corresponding reduced problem
\begin{equation}\label{SP JX multi R}
\tag{R.2}
    \mathbf{V}_{0}(\mathbf{u}_{0}) =
    \inf \; \int_{0}^{1} \ell(\mathbf{u}(s))\,\text{d}s +  \phi(z(1)), \; \text{ s.t.: } \;  \eqref{ODE eff app} \text{ with } s\in [0,1].
\end{equation}

\begin{rem}\label{rem: cost general}
    Observe that $\ell,\phi$ being dependent on $\mathbf{u}$ is a choice we made for simplicity only. Indeed the previous theoretical results apply for control problems where the cost functional depends on the slow variables (therein denoted by $z(\cdot)$) and which are in \eqref{relax 2 eps} the variables  $\mathbf{u}_{\varepsilon}, \mathbf{v}_{\varepsilon},\mathbf{w}_{\varepsilon}$. Hence, we could also consider $\ell,\phi$ as functions of $\mathbf{u},\mathbf{v},\mathbf{w}$ in which case, the dynamics \eqref{ODE eff app} would be complemented with \eqref{eq: v eff 2} and \eqref{eq: w eff 2} and the value function would be $\mathbf{V}_{0}(\mathbf{u}_{0}, \mathbf{v}_{0}, \mathbf{w}_{0})$.
\end{rem}

A consequence of Theorem \ref{thm: conv multi} is the following.

\begin{cor}\label{cor JX 3}
    The value function $\mathbf{V}^{\varepsilon}$ of \eqref{SP JX multi} converges locally uniformly on $\big(\mathds{R}^{m}\big)^{5}\times (0,1]$ to the value function $\mathbf{V}_{0}$ of \eqref{SP JX multi R}.
\end{cor}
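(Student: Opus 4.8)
The plan is to obtain Corollary~\ref{cor JX 3} as a direct specialisation of Theorem~\ref{thm: conv multi} to the Jin--Xin three--scale system. The first step is to confirm that the controlled dynamics \eqref{relax 2 eps} is an instance of the abstract three--scale system \eqref{CSP 2} under the identification already recorded before the statement, namely $z=[\mathbf{u}_\varepsilon^\top,\mathbf{v}_\varepsilon^\top,\mathbf{w}_\varepsilon^\top]^\top$, $y=\mathbf{p}_\varepsilon$, $x=\mathbf{q}_\varepsilon$, together with the explicit blocks $A_0,A_1,B_1,C_1,A_2,B_2,C_2,A_3,B_3,C_3$. In particular one reads off that the coupling of the micro--scale $x=\mathbf{q}_\varepsilon$ into the slow block enters only through the affine term $A_0 x$ (the $\mathbf{q}_\varepsilon$ contribution to the $\dot{\mathbf{v}}$ equation), while $\mathbf{w}_\varepsilon$ sits inside the slow variable $z$ and is therefore carried by $C_1$; this matches exactly the structure of \eqref{CSP 2}.

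The second step is to verify that the standing assumptions (SA) hold for these coefficients. Here $A_2=-I_d$ and $A_3=-I_d$, so both fast matrices have all eigenvalues equal to $-1$ and the stability condition \eqref{assumption fast} is satisfied with $\lambda=1$, uniformly in $z$; by Remark~\ref{rem periodic} this legitimises the periodic reduction of the fast variables $\mathbf{p}_\varepsilon,\mathbf{q}_\varepsilon$ to tori $\mathds{T}^n,\mathds{T}^\ell$. For the controllability requirement I would invoke (SA.6)--(SA.7): since $B_2=\mathds{G}(\mathbf{u}_\varepsilon)$ and $B_3=\mathds{K}(\mathbf{u}_\varepsilon)$ are assumed of full rank and the control sets $\Omega_B,\Omega_\Gamma$ are compact, convex and sufficiently large, the strong controllability condition \eqref{strong controllability} holds for each fast layer, which is what Lemma~\ref{lem: corrector} (equivalently statements $(1)$--$(2)$ of Theorem~\ref{thm: conv multi}) needs in order to produce the correctors $\lambda_2,V_2$ and $\lambda_1,V_1$. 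The remaining hypotheses reduce to local Lipschitz continuity and at most linear growth of the data, which for the constant and linear blocks $A_0,A_1,A_2,A_3,C_1$ and the transport matrix $D$ is automatic, and for $C_2,C_3$ amounts to the same regularity of the nonlinearities $\mathcal{F}_0,\mathcal{F}_1$ (and of $\mathds{H},\mathds{G},\mathds{K}$).

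With (SA) in force, statement $(3)$ of Theorem~\ref{thm: conv multi} applies and yields $\mathbf{V}^\varepsilon\to\mathbf{V}_0$ locally uniformly, where $\mathbf{V}_0$ solves the effective equation \eqref{eff HJB multi}; by the control representation obtained in Theorem~\ref{thm conv}, applied to the limiting reduced dynamics \eqref{ODE eff app}, this $\mathbf{V}_0$ is precisely the value function of the reduced problem \eqref{SP JX multi R}. The last step is bookkeeping: the abstract theorem gives convergence uniformly on compact sets in the slow variable $z=(\mathbf{u}_\varepsilon,\mathbf{v}_\varepsilon,\mathbf{w}_\varepsilon)$ and for every initial position of the fast variables $\mathbf{p}_\varepsilon,\mathbf{q}_\varepsilon$, the limit being independent of the latter; re--expressing this over the five original slots gives local uniform convergence on $\big(\mathds{R}^m\big)^5\times(0,1]$ as claimed.

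I expect the only genuinely delicate point to be the verification of the controllability condition \eqref{strong controllability} at both fast scales simultaneously, together with checking that the regularity of $\mathcal{F}_0,\mathcal{F}_1$ suffices for the affine structure of \eqref{CSP 2}; once these are in place the convergence is inherited verbatim from the cascaded argument underlying Theorem~\ref{thm: conv multi}. A secondary point worth stating explicitly is that the limit value function forgets the fast initial data $\mathbf{p}_0,\mathbf{q}_0$ (and, if $\ell,\phi$ depend only on $\mathbf{u}$, also $\mathbf{v}_0,\mathbf{w}_0$, cf.\ Remark~\ref{rem: cost general}), so that $\mathbf{V}_0$ is correctly a function of $\mathbf{u}_0$ alone.
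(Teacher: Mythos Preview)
Your proposal is correct and follows exactly the approach of the paper, which simply states the corollary as ``a consequence of Theorem~\ref{thm: conv multi}'' after having recorded the identification of \eqref{relax 2 eps} with the abstract system \eqref{CSP 2}. You supply more detail than the paper does---in particular the explicit verification of (SA.5)--(SA.7) via $A_2=A_3=-I_d$ and the full-rank hypothesis on $\mathds{G},\mathds{K}$, and the identification of the effective solution with the value function of \eqref{SP JX multi R}---but this is precisely the content the paper leaves implicit.
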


In other words, the above corollary states that it would be sufficient to rely on the control produced for the singularly perturbed dynamics, to approximate the control of \eqref{ODE eff app}. This control is a discretisation of \eqref{eq: original pde}. Moreover $\mathbf{V}^{\varepsilon}$ admits the asymptotic expansion described in \S \ref{sec: linear 2} where the dominant term is $\mathbf{V}_{0}$.

\section{Summary}

The object of the present manuscript is twofold.  First, we provide a new formulation for controlled stiff differential equations based on singular perturbations. The convergence of the value functions with respect  to the stiffness parameter is shown. An asymptotic expansion of the value function corresponding to the control problem is formulated. This is useful in particular when designing  feedback controls  depending on the gradient of the value function.  The second contribution of the manuscript is the higher order approximation of controlled stiff differential equations. Consequently, this leads to a  higher order corrections of the value function. We show how our results apply to Jin--Xin relaxation system with additionally a control parameter. We also expand Jin--Xin relaxation system to allow for higher--order formulation and approximation. 

Finally, it is worth mentioning that an application of the present framework would be highly desirable in the context of multi-agent systems and for their control. In the example of systems with human-in-the-loop, or unmanned aerial vehicle (UAV), this might refer to the automatic control of the system compared with the human action, which in case of gain disturbance would require a sensor that is sufficiently fast compared to the dynamics of the systems.

\subsection*{Acknowledgement} The authors are grateful to the referees for their careful reading, and for their comments which helped improve the manuscript.

\appendix
\section{Limit Occupational Measure Sets}\label{appendix}

Our assumptions allow us to obtain a limiting problem (i.e. the leading term in the expansion when $\varepsilon\to 0$) that can be easily constructed by setting $\varepsilon =0$ in \eqref{eq: CSP intro 1} and solving for $\mathbf{y}$ the equation $G(\mathbf{z}(s),\mathbf{y}(s),\beta(s),s)=0$. This construction is desirable in view of our motivating applications. But for general optimal control problems, such limit is wrong, and a more sophisticated limiting problem is usually obtained as follows. 

Denote by $\lambda\{I\}$ the Lebesgue measure of the interval $I$, and let $y_{z}$ follow the same dynamics as $y$ in \eqref{CSP} but where $\varepsilon=1$ and $z$ is frozen (constant). 
We define the \textit{occupational measures} (see \cites{gaitsgory1992suboptimization, homocp,gaitsgory1999limit, kouhkouh22phd}) as
\begin{equation*}
    \varphi_{z}^{(y_{0},\beta, T)}(Q) := \frac{1}{T}\lambda\{s \in [0,T] \, | \, (y_{z}(s),\beta(s)) \in Q\} ,
\end{equation*}
for any $Q$ a Borel subset of $\mathds{T}^{n}\times \Omega_{B}$. 
We denote the union of such measures over all admissible controls
\begin{equation*}
    \Psi(z,T,y_{0}) := \bigcup_{\beta}\big\{\varphi_{z}^{(y_{0},\beta, T)}\big\}.
\end{equation*}
Then for all $z\in Z,\, y_{0}\in \mathds{T}^{n}$ where $Z$ is a compact set (see remark \ref{rem periodic}), the limit in the Hausdorff metric as $T\to +\infty$ is
\begin{equation}\label{LOMS}
    \lim\limits_{T\to +\infty}\Psi(z,T,y_{0}) = \mathfrak{L}(z),\quad \forall\, z\in Z,\, y_{0}\in \mathds{T}^{n},
\end{equation}
called the \textit{limit occupational measure set} (LOMS) whose existence is discussed in \cite{gaitsgory1999limit} and references therein. Also define the set
\begin{equation}
    V_{f}(z) := \left\{ v\, \bigg|\, v = \int_{\mathds{T}^{n}\times \Omega_{B}} f(z,y,\alpha, \beta)\,\varphi(\dd y \times \dd \beta),\; \varphi \in \mathfrak{L}(z),\; \alpha\in \Omega_{A} \right\},
\end{equation}
where $f$ here denotes the dynamics of the slow variable $z$, which in the situation of \eqref{CSP} is given by $f(z,y,\alpha, \beta) = A_{1}(z)y + B_{1}(z)\alpha + C_{1}(z)$.

A result from \cite{gaitsgory1999limit} (see also \cite{gaitsgory1992suboptimization}) ensures the existence of a function $\mu(\varepsilon)$ such that $\,\lim\limits_{\varepsilon\to 0} \, \mu(\varepsilon)=0,\,$ 
and corresponding to any trajectory $(z_{\varepsilon}(\cdot),y_{\varepsilon}(\cdot))$ of \eqref{CSP}, there exists a solution $\tilde{z}(\cdot)$ of 
\begin{equation}\label{limit sys}
    \dot{\tilde{z}}(s) \in V_{f}(\tilde{z}(s)),\quad \tilde{z}(0)=z_{0},
\end{equation}
such that
\begin{equation}
    \max\limits_{s\in[0,T]}\|z_{\varepsilon}(s) - \tilde{z}(s)\| \leq \mu(\varepsilon).
\end{equation}
Conversely, corresponding to any solution of \eqref{limit sys} there exists a trajectory of \eqref{CSP} such that the same inequality is satisfied.

Concerning the HJB equation, when $\varepsilon\to 0$, the \textit{effective} (limit) PDE problem obtained in \cite[Theorem 3.1]{homocp} and \cite[
Theorem 1.3.1]{kouhkouh22phd} is 
\begin{equation}
    \left\{ \; 
    \begin{aligned}
        & \partial_{t}\widetilde{v} + \widetilde{H}(z,D_{z}\widetilde{v}) = \ell(z),\; \text{ in } \mathds{R}^{m}\times(0,T),\\
        & \widetilde{v}(z,0) = \phi(z),\; \text{ in } \mathds{R}^{m},
    \end{aligned}
    \right.
\end{equation}
for $T\in (0,+\infty)$, where the \textit{effective} Hamiltonian (compare with $\widetilde{H}$ in the proof of Theorem \ref{thm conv}) is
\begin{equation}\label{eff Ham}
    \widetilde{H}(z,p) = \sup\limits_{\alpha\in \Omega_{A}, \mu \in \mathfrak{L}(z)}\{ - p\cdot \widetilde{f}(z,\alpha,\mu) \},
\end{equation}
and the \textit{effective} dynamics is
\begin{equation}
    \widetilde{f}(z, \alpha,\mu) := \int_{\mathds{T}^{n}\times \Omega_{B}} f(z,y,\alpha,\beta)\,\mu(\dd y\times \dd \beta).
\end{equation}

\begin{rem}
    A particular example of a measure $\mu_{\circ}$ belonging to the LOMS \eqref{LOMS} is the product of two Dirac measures $\mu_{\circ}:=\delta_{\bar{y}}(\text{d}y)\otimes\delta_{\bar{\beta}}(\text{d}\beta)$. In this case, the resulting dynamics for the slow process becomes
    \begin{equation*}
         \widetilde{f}(z, \alpha,\mu_{\circ}) := \int_{\mathds{T}^{n}\times \Omega_{B}} f(z,y,\alpha,\beta)\,\mu_{\circ}(\dd y\times \dd \beta) = f(z,\bar{y},\alpha,\bar{\beta}).
    \end{equation*}
    This corresponds to the \textit{reduced} case which would be our situation. \\
    \indent In this sense, the reduced dynamics (obtained when $\mu$ is a product of Diracs) is a particular example of the averaged dynamics (obtained for any measure $\mu\in \mathfrak{L}(z)$ the LOMS in \eqref{LOMS}). In general, the LOMS contains much more measures than just the product of two Diracs. 
\end{rem}

Moreover, $\widetilde{v}$ is the value function of the (\textit{effective}) optimal control problem 
\begin{equation}\label{averaged ocp}
\begin{aligned}
    \widetilde{v}(z,t) =&  \inf\limits_{\alpha,\mu}\, \int_{0}^{t}{\ell}(\tilde{z}(s)\,\dd s + \phi(\tilde{z}(t))\\
    & \; \text{ subject to: } \dot{\tilde{z}}(s) = \widetilde{f}(\tilde{z}(s), \alpha(s),\mu(s)),\quad \tilde{z}(0) = z_{0},\\
    &\quad \quad \,\; \text{ and } \quad \mu(s) \in \mathfrak{L}(\tilde{z}(s)), \quad \alpha(s)\in \Omega_{A}.
\end{aligned}
\end{equation}
The dynamics can be written as a differential inclusion 
\begin{equation*}
    \dot{\Tilde{z}}(s) \in \widetilde{f}(\tilde{z}(s), \alpha(s), \mathfrak{L}(\tilde{z}(s))), 
\end{equation*}
which is also \eqref{limit sys}. 
This is reminiscent to relaxed optimal controls. 

In our setting, the LOMS reduces to a singleton as stated in the proof of Theorem \ref{thm conv}. Hence, the value function $V_{0}$ in Theorem \ref{thm conv} coincides with $\widetilde{v}$ above, and the limiting dynamics $\bar{z}(\cdot)$ in \eqref{reduced} coincides with $\tilde{z}(\cdot)$. \\
It is clear that the control problem \eqref{value eff} is much simpler than \eqref{averaged ocp}.

\section{Examples of application}\label{appendix examples}

\subsection{Goldstein-Taylor two--scale model}\label{sec: GT 2}

We consider the Goldstein--Taylor model in the formulation of \cite{albi2014asymptotic}. The latter describes the time evolution of two-particle densities $f^{+}(x,t)$ and $f^{-}(x,t)$, with $x\in \Omega\subset \mathds{R}$ and $t\in \mathds{R}^{+}$, where $f^{+}(x,t)\,$ (respectively $f^{-}(x,t)$) denotes the density of particles at time $t>0$ travelling  along a straight line with velocity $+c$ (respectively $-c$). The particle changes with rate $\sigma$ the direction. The differential model can be written as
\begin{equation*}
    \begin{aligned}
        f^{+}_{t} + c f^{+}_{x} & = \sigma (f^{-} - f^{+}), \\
        f^{-}_{t} - c f^{-}_{x} & = \sigma (f^{+} - f^{-}).
    \end{aligned}
\end{equation*}
Introducing the macroscopic variables $\rho = f^{+} + f^{-},\quad j = c (f^{+} - f^{-}),$
we obtain the equivalent form
\begin{equation*}
\left\{\;
    \begin{aligned}
        & \partial_{t}\rho + \partial_{x} j  = 0,\\
        & \partial_{t} j + c^{2}\partial_{x}\rho  = 2\sigma j.
    \end{aligned}
\right.
\end{equation*}
Setting $c^{2} = 2\sigma = 1/\varepsilon$ where  $\varepsilon>0$ is the relaxation parameter, yields
\begin{equation}\label{eq: pde GT}
\left\{\;
    \begin{aligned}
        & \partial_{t} \rho = - \partial_{x} j,  \\
        & \partial_{t}j = \frac{1}{\varepsilon}\left( - j -\partial_{x}\rho \right),
    \end{aligned}
\right.
\end{equation}
which has the desired singularly perturbed structure. When $\varepsilon\to 0$, we obtain
\begin{equation}\label{eq: pde GT 2}
    j(x,t) = - \partial_{x}\rho(x,t).
\end{equation}
The previous results can be applied after using a semi--discretisation in space. This leads to a similar structure as in the previous section. Given two controls $\alpha,\beta$ (not necessarily different) and two matrices $\mathds{H},\mathds{G}$ depending on $\bm{\rho}$, the singularly perturbed discrete dynamics is
\begin{equation}\label{eq: relax rho j}
\left\{\;
    \begin{aligned}
        & \dot{\bm{\rho}}_{\varepsilon}(s) =  -D\bm{J}_{\varepsilon}(s) + \mathds{H}(\bm{\rho}_{\varepsilon}(s))\alpha(s),  && \bm{\rho}_{\varepsilon}(0) = \bm{\rho}_{0} \in \mathds{R}^{m},\\
        &\dot{\bm{J}}_{\varepsilon}(s) = - \frac{1}{\varepsilon}[D \bm{\rho}_{\varepsilon}(s) + \bm{J}_{\varepsilon}(s) - \mathds{G}(\bm{\rho}_{\varepsilon}(s))\beta(s)], && \bm{J}_{\varepsilon}(0) = \bm{J}_{0}\in \mathds{R}^{m},
    \end{aligned}
\right.
\end{equation}
and the corresponding reduced dynamics is
\begin{equation}\label{eq: rho j}
        \dot{\bm{\rho}}(s) + D [-D\bm{\rho}(s) + \mathds{G}(\bm{\rho}_{\varepsilon}(s))\beta(s)] = \mathds{H}(\bm{\rho}_{\varepsilon}(s))\alpha(s) ,  \quad \bm{\rho}(0) = \bm{\rho}_{0} \in \mathds{R}^{m}.
\end{equation}
The latter dynamics corresponds to the discretisation of the controlled PDE
\begin{equation*}
\left\{\;
\begin{aligned}
    & \partial_{t}\rho(t,x) + \partial_{x}[-\partial_{x}\rho(t,x) + \mathds{G}(\rho(t,x))\beta(t,x) ]= \mathds{H}(\rho(t,x))\alpha(t,x),\\
    & \rho(0,x) = \rho_{0}(x), \quad (t,x)\in (0,+\infty)\times \mathds{R}.
\end{aligned}
\right.
\end{equation*}
In the absence of controls, this becomes the heat equation
\begin{equation}\label{eq: heat}
    \partial_{t}\rho(t,x) = \partial_{xx}\rho(t,x), \quad \rho(0,x) = \rho_{0}(x),\quad (t,x)\in (0,+\infty)\times \mathds{R} ,
\end{equation}
which is also what one gets when substituting \eqref{eq: pde GT 2} in the first PDE of \eqref{eq: pde GT}. 

Let us introduce the two optimal control problems
\begin{equation}\label{eq: value eps GT}
\tag{SP.3}
    \inf \; \Phi(\bm{\rho}_{\varepsilon}(T)),\quad \text{s.t. } \; \eqref{eq: relax rho j}\,,
\end{equation}
and
\begin{equation}\label{eq: value eff GT}
\tag{R.3}
    \inf \; \Phi(\bm{\rho}_{\varepsilon}(T)),\quad \text{s.t. } \; \eqref{eq: rho j}\,.
\end{equation}
The following corollary is a direct consequence of Theorem \ref{thm: conv G}.

\begin{cor}\label{cor: GT}
    As $\varepsilon\to 0$, the problem \eqref{eq: value eps GT} is approximated by \eqref{eq: value eff GT} in the sense of Definition \ref{def:approx}. In particular, the controlled system \eqref{eq: relax rho j} with any given initial conditions converges to \eqref{eq: rho j} locally uniformly on any finite time interval, and we have $\inf\, \Phi(\bm{\rho}_{\varepsilon}(T)) \to \inf \, \Phi(\bm{\rho}(T))$. 
\end{cor}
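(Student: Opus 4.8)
The plan is to recognise the controlled Goldstein--Taylor system \eqref{eq: relax rho j} as a particular instance of the abstract affine system \eqref{CSP}, after which Theorem \ref{thm: conv G} applies without modification. First I would set the slow and fast variables to be $z=\bm{\rho}_{\varepsilon}$ and $y=\bm{J}_{\varepsilon}$ (so that here $n=m$), and read off the coefficients by matching \eqref{eq: relax rho j} termwise with \eqref{CSP}. The slow equation gives $A_{1}=-D$, $B_{1}=\mathds{H}(\bm{\rho})$ and $C_{1}=0$, while the fast equation $\varepsilon\dot{\bm{J}}_{\varepsilon}=-D\bm{\rho}_{\varepsilon}-\bm{J}_{\varepsilon}+\mathds{G}(\bm{\rho}_{\varepsilon})\beta$ gives $A_{2}=-I$, $B_{2}=\mathds{G}(\bm{\rho})$ and $C_{2}=-D\bm{\rho}$, where $I$ is the $m\times m$ identity.

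I would then check the assumptions (A). All regularity and compactness items follow at once from the standing hypotheses on $\mathds{H},\mathds{G},\Phi$ and on $\Omega_{A},\Omega_{B}$. The only condition deserving a line is the stability assumption (a.6): because $A_{2}=-I$ is constant in $z$, we have $e^{A_{2}t}=e^{-t}I$ and hence $\|e^{A_{2}t}\|=e^{-t}$, so \eqref{assumption fast} holds with $\lambda=1$, uniformly in $z$, and every eigenvalue of $A_{2}$ equals $-1<0$. This is the step where exponential stability of the fast dynamics must be verified, and it is immediate here because the relaxation matrix is the negative identity.

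Next I would confirm that the reduction prescribed by \eqref{static}--\eqref{reduced} reproduces exactly the reduced problem \eqref{eq: value eff GT}. Since $A_{2}=-I$ is invertible, the static equation $0=-D\bm{\rho}-\bm{J}+\mathds{G}(\bm{\rho})\beta$ has the unique root $\psi(\bm{\rho},\beta)=-D\bm{\rho}+\mathds{G}(\bm{\rho})\beta$; substituting $\bm{J}=\psi(\bm{\rho},\beta)$ into the slow equation and rearranging gives precisely $\dot{\bm{\rho}}+D[-D\bm{\rho}+\mathds{G}(\bm{\rho})\beta]=\mathds{H}(\bm{\rho})\alpha$, which is the reduced dynamics \eqref{eq: rho j}. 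Thus the abstract reduced problem \eqref{R} is indeed \eqref{eq: value eff GT}.

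With the identification and assumptions in place, Theorem \ref{thm: conv G} yields that \eqref{eq: value eps GT} is approximated by \eqref{eq: value eff GT} in the sense of Definition \ref{def:approx}, and in particular the value convergence $\inf\,\Phi(\bm{\rho}_{\varepsilon}(T))\to\inf\,\Phi(\bm{\rho}(T))$. For the locally uniform trajectory convergence at fixed admissible controls I would invoke the classical Tikhonov-type singular-perturbation estimate, whose only hypothesis is the exponential stability (a.6) already verified, together with the uniform compactness of the orbits noted in Remark \ref{rem periodic}. The sole genuine subtlety is therefore bookkeeping rather than analysis: keeping the value-level claim (Theorem \ref{thm: conv G}) separate from the trajectory-level claim, both of which rest on the single stability bound (a.6).
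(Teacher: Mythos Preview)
Your proposal is correct and follows exactly the approach the paper intends: the paper states only that the corollary is a direct consequence of Theorem \ref{thm: conv G}, and you have supplied the explicit identification of \eqref{eq: relax rho j} with the abstract system \eqref{CSP} together with the verification of the assumptions (A), which the paper leaves implicit. Your coefficient matching, the check of (a.6) via $A_{2}=-I$, and the confirmation that the static reduction recovers \eqref{eq: rho j} are all accurate and complete.
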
 

\subsection{Goldstein-Taylor three--scale model}\label{sec: GT 3}

Recalling the model in the previous section \S \ref{sec: GT 2}, we would like now to get a three--scale approximation. To do so, we suppose we have an additional term $\mathcal{F}_{1}(\rho)$ of order $\varepsilon$ in \eqref{eq: heat} 
\begin{equation*}
    \partial_{t}\rho(t,x) + \partial_{x}\big[-\partial_{x}\rho(t,x) + \varepsilon \, \mathcal{F}_{1}(\rho(t,x))\big] = 0.
\end{equation*}
The three--scale relaxation becomes, for some $a,b>0$ fixed,
\begin{equation*}
\left\{\;
\begin{aligned}
    & \partial_{t}\rho = -\partial_{x}j,\\
    & \partial_{t} j  + a\, \partial_{x} \rho 
    = - \frac{1}{\varepsilon}\big[ j - ( -\partial_{x}\rho +\varepsilon w) \big],\\
    & \partial_{t} w + b\, \partial_{x}\rho 
    = -\frac{1}{\varepsilon^{2}}\big[ w - \mathcal{F}_{1}(\rho) \big].
\end{aligned}
\right.
\end{equation*}
Repeating what we have done in the beginning of \S \ref{sec: JX 3 app}, we get
\begin{equation}\label{with ab}
\left\{\;
\begin{aligned}
    & \partial_{t} \rho &=& -\partial_{x}[j-p] - \partial_{x}p,\\
    & \partial_{t}[j-p] &=& \; - a \, \partial_{x}\rho +
    [w-q] + q,\\
    & \partial_{t} [w-q] &=&\; - b\, \partial_{x}\rho,
    \\
    & \partial_{t} p &=& -\frac{1}{\varepsilon}\big( [j-p] +\partial_{x}\rho + p\big),\\
    & \partial_{t} q &=& - \frac{1}{\varepsilon^{2}}\big([w-q] - \mathcal{F}_{1}(\rho) + q\big).
\end{aligned}
\right.
\end{equation}
Note if we choose $a=b=0$, the latter system simplifies as follows
\begin{equation}\label{without ab}
\left\{\;
\begin{aligned}
    & \partial_{t} \rho &=& -\partial_{x}[j-p] - \partial_{x}p,\\
    & \partial_{t}[j-p] &=& \; w,\\
    & \partial_{t} p &=& -\frac{1}{\varepsilon}\big( [j-p] +\partial_{x}\rho + p\big),\\
    & \partial_{t} w &=& - \frac{1}{\varepsilon^{2}}\big(w - \mathcal{F}_{1}(\rho) \big).
\end{aligned}
\right.
\end{equation}
For the sake of generality, we let $a,b\geq 0$. In particular, they are allowed to be null, contrary to Jin--Xin relaxation in the previous sections.

Let us consider the semi-discretisation in space of the latter PDE system by introducing 
$(\bm{\rho}_{\varepsilon}, \bm{J}_{\varepsilon}, \bm{w}_{\varepsilon}, \bm{p}_{\varepsilon}, \bm{q}_{\varepsilon})$ the discretisation of $(\rho, [j-p], [w-q], p, q)$ respectively, and letting $D$ be for example a first order finite-volume spatial discretisation of the transport operator $\partial_{x}$. We shall also introduce three control parameters (not necessarily different) $\alpha,\beta,\gamma$ and three matrices $\mathds{H},\mathds{G},\mathds{K}$ functions of $\bm{\rho}_{\varepsilon}$. Then one gets the system of controlled and singularly perturbed ODEs with $s\in [0,1]$,
\begin{equation}\label{relax 3 eps}
\left\{\;
\begin{aligned}
    \dot{\bm{\rho}}_{\varepsilon}(s) & = -D \bm{J}_{\varepsilon}(s) - D\bm{p}_{\varepsilon}(s)  + \mathds{H}(\bm{\rho}_{\varepsilon}(s))\alpha(s), && \bm{\rho}_{\varepsilon}(0) = \bm{\rho}_{0}\in \mathds{R}^{m},\\
    \dot{\bm{J}}_{\varepsilon}(s) & =\; -a \, D\bm{\rho}_{\varepsilon}(s) + 
    \bm{w}_{\varepsilon}(s) + \bm{q}_{\varepsilon}(s), && \bm{J}_{\varepsilon}(0) = \bm{J}_{0}\in \mathds{R}^{m},\\
    \dot{\bm{w}}_{\varepsilon}(s) & =\; -b\, D \bm{\rho}_{\varepsilon}(s), && \bm{w}_{\varepsilon}(0) = \bm{w}_{0}\in \mathds{R}^{m},\\
    \dot{\bm{p}}_{\varepsilon}(s) & = -\frac{1}{\varepsilon}\big[\bm{p}_{\varepsilon}(s) + \bm{J}_{\varepsilon}(s) + D \bm{\rho}_{\varepsilon}(s)  - \mathds{G}(\bm{\rho}_{\varepsilon}(s))\beta(s)\big], && \bm{p}_{\varepsilon}(0) = \bm{p}_{0}\in \mathds{R}^{m},\\
    \dot{\bm{q}}_{\varepsilon}(s) & = -\frac{1}{\varepsilon^{2}} \big[\bm{q}_{\varepsilon}(s) +  \bm{w}_{\varepsilon}(s) - \mathcal{F}_{1}(\bm{\rho}_{\varepsilon}(s))  - \mathds{K}(\bm{\rho}_{\varepsilon}(s))\gamma(s)\big], &&\bm{q}_{\varepsilon}(0)= \bm{q}_{0} \in \mathds{R}^{m}.
\end{aligned}
\right.
\end{equation}
For simplicity, we shall consider a control problem whose running cost and final costs are functions of $\bm{\rho}_{\varepsilon}$ only, the general case being discussed in Remark \ref{rem: cost general},
\begin{equation}\label{SP GT multi}
\tag{SP.4}
    \mathbf{V}^{\varepsilon}(\bm{\rho}_{0}, \bm{J}_{0}, \bm{w}_{0}, \bm{p}_{0}, \bm{q}_{0}) = 
    \inf \; \int_{0}^{1} \ell(\bm{\rho}_{\varepsilon}(s))\,\text{d}s +  \phi(z(1)), \; \text{ s.t.: } \;  \eqref{relax 3 eps}.
\end{equation}
Its corresponding reduced problem is
\begin{equation}\label{SP GT multi R}
\tag{R.4}
    \mathbf{V}_{0}(\bm{\rho}_{0}) =
    \inf \; \int_{0}^{1} \ell(\bm{\rho}(s))\,\text{d}s +  \phi(z(1)), \; \text{ s.t.: } \;  \eqref{eq: rho j}.
\end{equation}
A consequence of Theorem \ref{thm: conv multi} is the following.

\begin{cor}\label{cor GT 3}
    The value function $\mathbf{V}^{\varepsilon}$ of \eqref{SP GT multi} converges locally uniformly on $\big(\mathds{R}^{m}\big)^{5}\times (0,1]$ to the value function $\mathbf{V}_{0}$ of \eqref{SP GT multi R}.
\end{cor}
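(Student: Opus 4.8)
Corollary \ref{cor GT 3} claims that the value function $\mathbf{V}^\varepsilon$ of the controlled three-scale Goldstein–Taylor relaxation system \eqref{relax 3 eps} converges locally uniformly to the value function $\mathbf{V}_0$ of the reduced problem \eqref{SP GT multi R}. Let me sketch how I'd prove it.

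The plan is to recognise the controlled three-scale Goldstein--Taylor system \eqref{relax 3 eps} as a particular instance of the abstract three-scale system \eqref{CSP 2} and then to invoke part (3) of Theorem \ref{thm: conv multi}. The substance of the argument is therefore a bookkeeping identification of variables and coefficient functions, followed by a verification that the standing assumptions (SA), together with the stability and full-rank requirements on \emph{both} fast scales, are met.

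First I would group the state variables by time-scale: the slow (macro) variable is $z=[\bm{\rho}_\varepsilon^\top,\bm{J}_\varepsilon^\top,\bm{w}_\varepsilon^\top]^\top$, the meso variable (scale $\varepsilon$) is $y=\bm{p}_\varepsilon$, and the micro variable (scale $\varepsilon^2$) is $x=\bm{q}_\varepsilon$. Multiplying the last two equations of \eqref{relax 3 eps} by $\varepsilon$ and $\varepsilon^2$ respectively gives
\begin{equation*}
\varepsilon\dot{\bm{p}}_\varepsilon = -\bm{p}_\varepsilon - \bm{J}_\varepsilon - D\bm{\rho}_\varepsilon + \mathds{G}(\bm{\rho}_\varepsilon)\beta,\qquad
\varepsilon^2\dot{\bm{q}}_\varepsilon = -\bm{q}_\varepsilon - \bm{w}_\varepsilon + \mathcal{F}_1(\bm{\rho}_\varepsilon) + \mathds{K}(\bm{\rho}_\varepsilon)\gamma,
\end{equation*}
so that in the notation of \eqref{CSP 2} one reads off $A_2=A_3=-I$, $B_2=\mathds{G}(\bm{\rho}_\varepsilon)$, $B_3=\mathds{K}(\bm{\rho}_\varepsilon)$, while $C_2=-\bm{J}_\varepsilon-D\bm{\rho}_\varepsilon$ and $C_3=-\bm{w}_\varepsilon+\mathcal{F}_1(\bm{\rho}_\varepsilon)$ depend only on the slow variable $z$. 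The feedback of $x=\bm{q}_\varepsilon$ and $y=\bm{p}_\varepsilon$ into the slow block is linear and enters through the $A_0 x$ and $A_1 y$ terms, exactly as in the identification already performed for Jin--Xin in \S\ref{sec: JX 3 app}; the remaining slow-slow coupling (the $a,b$ transport terms and the $\bm{w}+\bm{q}$ source in the $\bm{J}$-equation) is absorbed into $C_1$.

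With this dictionary the hypotheses of Theorem \ref{thm: conv multi} are immediate. Both fast matrices equal $-I$ and hence satisfy the stability bound \eqref{assumption fast} with $\lambda=1$; the full-rank hypotheses on $B_2=\mathds{G}$ and $B_3=\mathds{K}$ supply the controllability condition \eqref{strong controllability} at each of the two scales, so the cell problems \eqref{meso} and \eqref{micro} are uniquely solvable by statements (1)--(2) of the theorem. Assuming, as throughout, that $\mathcal{F}_1,\mathds{H},\mathds{G},\mathds{K}$ are locally Lipschitz with at most linear growth and that $\ell,\phi$ are bounded and uniformly continuous, all of (SA) holds. Since $a$ and $b$ enter only as constants multiplying $D$ in $C_1$ and play no part in the fast dynamics, the admissibility of the values $a=b=0$ permitted here (in contrast to Jin--Xin) changes nothing in the argument. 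Statement (3) of Theorem \ref{thm: conv multi} then delivers the local uniform convergence of $\mathbf{V}^\varepsilon$ to $\mathbf{V}_0$ on $(\mathds{R}^m)^5\times(0,1]$, the uniformity in the slow initial data on compact sets and in the fast initial data $\bm{p}_0,\bm{q}_0$ being precisely the content of the stability assumption.

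The only point genuinely requiring care---and the closest thing to an obstacle---is that the \emph{two} fast scales must decouple in the correct cascaded order: the micro cell problem \eqref{micro} has to be solvable with $z,y$ frozen before the meso cell problem \eqref{meso} is addressed. Here this is transparent, since $A_3=-I$ does not involve $y$ and the $\bm{q}_\varepsilon$-equation couples to the slower variables only through the frozen data $\bm{w}_\varepsilon$ and $\mathcal{F}_1(\bm{\rho}_\varepsilon)$. The cascaded scheme of \cite[\S4.1]{alvarez2008multiscale} underlying Theorem \ref{thm: conv multi} therefore applies verbatim, and no estimate beyond those already established is needed.
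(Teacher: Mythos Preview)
Your proposal is correct and follows exactly the route the paper takes: recognise \eqref{relax 3 eps} as an instance of the abstract three-scale system \eqref{CSP 2} (the same identification carried out explicitly for Jin--Xin in \S\ref{sec: JX 3 app}), check that the standing assumptions (SA) and the stability/full-rank conditions on both fast blocks hold, and then invoke statement~(3) of Theorem~\ref{thm: conv multi}. The paper records the corollary as a direct consequence of Theorem~\ref{thm: conv multi} without spelling out the dictionary, so your write-up is in fact more detailed than what appears there.
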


\subsection{Shallow water and inviscid Burger's equation}

Let us recall the system as described in \cite[\S 6.1]{pareschi2005implicit} (see also \cite{jin1995runge}) for shallow water flow
\begin{equation*}
\left\{\;
    \begin{aligned}
        & \partial_{t} h + \partial_{x}(hv) = 0,\\
        & \partial_{t}(hv) + \partial_{x}\left(h + \frac{1}{2}h^{2}\right) = \frac{1}{\varepsilon}h\left(\frac{h}{2} - v\right),
    \end{aligned}
\right.
\end{equation*}
where $h$ is the height of the water with respect to the bottom and $hv$ the flux. 
The zero relaxation limit of this model is given by the inviscid Burgers equation.

Choosing $g$ such that $\partial_{t} g = \frac{1}{\varepsilon}\left(\frac{h^{2}}{2} - hv\right)$ and letting $f:= hv-g$ yield the system
\begin{equation*}
\left\{\;
    \begin{aligned}
        & \partial_{t} h = -  \partial_{x} f  -\partial_{x} g,\\
        & \partial_{t} f = - \partial_{x}\left(h + \frac{h^{2}}{2}\right), \\
        & \partial_{t} g = -\frac{1}{\varepsilon}\left(g+f -\frac{h^{2}}{2}\right).
    \end{aligned}
\right.
\end{equation*}
Hence, we recover the desired singularly perturbed structure. In the finite dimensional case, it becomes after introducing two control parameters $\alpha,\beta$
\begin{equation}\label{eq: relax WB}
\left\{\;
\begin{aligned}
    & \dot{\bm{h}}_{\varepsilon}(s) =  -D \bm{f}_{\varepsilon}(s) - D \bm{w}_{\varepsilon}(s) + \mathds{H}(\bm{h}_{\varepsilon}(s))\alpha(s), && \bm{h}_{\varepsilon}(0) = \bm{h}_{0} \in \mathds{R}^{m},\\
    & \dot{\bm{f}}_{\varepsilon}(s) = -D \bm{h}_{\varepsilon}(s) - \frac{1}{2}D (\bm{h}_{\varepsilon}(s))^{2}, && \bm{f}_{\varepsilon}(0) = \bm{f}_{0} \in \mathds{R}^{m},\\
    & \dot{\bm{g}}_{\varepsilon}(s) = -\frac{1}{\varepsilon}\left[\bm{g}_{\varepsilon}(s) + \bm{f}_{\varepsilon}(s) - \frac{1}{2}(\bm{h}_{\varepsilon}(s))^{2} - \mathds{G}(\bm{h}_{\varepsilon}(s))\beta(s)\right], && \bm{w}_{\varepsilon}(0) = \bm{w}_{0}\in \mathds{R}^{m}.
\end{aligned}
\right.
\end{equation}
Here, $((\bm{h}_{\varepsilon}(s))^{2})$ is the vector whose entries are the square of the entries of $\bm{h}_{\varepsilon}(s)$. The control parameters can for example refer to the bottom profile. At the limit $\varepsilon \to 0$, one gets 
\begin{equation}\label{eq: eff WB}
    \dot{\bm{h}}(s) =  -\frac{1}{2}D(\bm{h}(s))^{2} - D\mathds{G}(\bm{h}(s))\beta(s) + \mathds{H}(\bm{h}(s))\alpha(s),
\end{equation}
which is as expected the discretisation of the controlled inviscid Burgers equation
\begin{equation*}
    \partial_{t}h(t,x) + \frac{1}{2}\partial_{x}\big[h^{2}(t,x) + \mathds{G}(h(t,x))\beta(t,x)\big] = \mathds{H}(h(t,x))\alpha(t,x).
\end{equation*}

Let us introduce the two optimal control problems
\begin{equation}\label{eq: value eps WB}
\tag{SP.5}
    \inf \; \Phi(\bm{h}_{\varepsilon}(T)),\quad \text{s.t. } \; \eqref{eq: relax WB}\,,
\end{equation}
and
\begin{equation}\label{eq: value eff WB}
\tag{R.5}
    \inf \; \Phi(\bm{h}_{\varepsilon}(T)),\quad \text{s.t. } \; \eqref{eq: eff WB}\,.
\end{equation}
The following corollary is a direct consequence of Theorem \ref{thm: conv G}.

\begin{cor}\label{cor: WB}
    As $\varepsilon\to 0$, the problem \eqref{eq: value eps WB} is approximated by \eqref{eq: value eff WB} in the sense of Definition \ref{def:approx}. In particular, the controlled system \eqref{eq: relax WB} with any given initial conditions converges to \eqref{eq: eff WB} locally uniformly on any finite time interval, and we have $\inf\, \Phi(\bm{h}_{\varepsilon}(T)) \to \inf \, \Phi(\bm{h}(T))$. 
\end{cor}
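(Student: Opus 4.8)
The plan is to recast the shallow--water relaxation system \eqref{eq: relax WB} into the abstract affine framework \eqref{CSP} and then invoke Theorem \ref{thm: conv G} verbatim, exactly as was done for the Jin--Xin two--scale system. First I would designate the slow variable $z := (\bm{h}_{\varepsilon}, \bm{f}_{\varepsilon})\in\mathds{R}^{2m}$ and the fast variable $y := \bm{g}_{\varepsilon}\in\mathds{R}^{m}$. Reading off the coefficients from \eqref{eq: relax WB}, the fast equation is already of the required form with
\[
A_{2}(z) = -I_{m}, \qquad B_{2}(z) = \mathds{G}(\bm{h}_{\varepsilon}), \qquad C_{2}(z) = -\bm{f}_{\varepsilon} + \tfrac{1}{2}(\bm{h}_{\varepsilon})^{2},
\]
while the slow block reads $A_{1}(z)\,y = (-D\bm{g}_{\varepsilon},\,0)^{\top}$, $B_{1}(z)\alpha = (\mathds{H}(\bm{h}_{\varepsilon})\alpha,\,0)^{\top}$ and $C_{1}(z) = \big(-D\bm{f}_{\varepsilon},\;-D\bm{h}_{\varepsilon} - \tfrac12 D(\bm{h}_{\varepsilon})^{2}\big)^{\top}$.

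Next I would verify the hypotheses (A). The stability condition \eqref{assumption fast} is immediate since $A_{2}=-I_{m}$ gives $\|e^{A_{2}t}\| = e^{-t}$, i.e. $\lambda = 1$, and all eigenvalues equal $-1<0$. The matrix $B_{2} = \mathds{G}(\bm{h})$ has full rank by the standing hypotheses on $\mathds{G}$, so the controllability condition \eqref{strong controllability} holds and the fast dynamics is small--time controllable. Solving the static equation \eqref{static}, namely $0 = -[\,\bm{g} + \bm{f} - \tfrac12(\bm{h})^{2} - \mathds{G}(\bm{h})\beta\,]$, yields the unique root
\[
\psi(z,\beta) = -\bm{f} + \tfrac12(\bm{h})^{2} + \mathds{G}(\bm{h})\beta .
\]
Substituting $\psi$ into the $\bm{h}$--equation produces
\[
\dot{\bm{h}} = -D\bm{f} - D\psi(z,\beta) + \mathds{H}(\bm{h})\alpha = -\tfrac12 D(\bm{h})^{2} - D\mathds{G}(\bm{h})\beta + \mathds{H}(\bm{h})\alpha,
\]
which is exactly the reduced dynamics \eqref{eq: eff WB}; since the cost $\Phi(\bm{h})$ involves only $\bm{h}$, the $\bm{f}$--component decouples and may be discarded.

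With these identifications the approximation in the sense of Definition \ref{def:approx}, the locally uniform convergence of trajectories, and the convergence $\inf\Phi(\bm{h}_{\varepsilon}(T)) \to \inf\Phi(\bm{h}(T))$ all follow from Theorem \ref{thm: conv G}. The main obstacle, and the only genuinely delicate point, is that the flux $\tfrac12(\bm{h})^{2}$ is quadratic and therefore violates the global linear--growth requirement that underlies Remark \ref{rem periodic} and the compactness used in Theorem \ref{thm: conv G}: a priori the semi--discrete conservation law for $(\bm{h},\bm{f})$ need not admit globally defined solutions. I would circumvent this by fixing a horizon $[0,T]$ and a compact set of initial data, deriving an a priori bound on $(\bm{h}_{\varepsilon},\bm{f}_{\varepsilon})$ uniform in $\varepsilon$ — using the exponential damping $\lambda=1$ of the fast variable to control $\bm{g}_{\varepsilon}$ — and then replacing the quadratic nonlinearity by a globally Lipschitz truncation that coincides with $\tfrac12 s^{2}$ on the resulting invariant region. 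As all trajectories remain inside this region, the truncated system satisfies (A) with linear growth and is indistinguishable from \eqref{eq: relax WB} there, so Theorem \ref{thm: conv G} applies to the truncation and the conclusion transfers back unchanged.
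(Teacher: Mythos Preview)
Your approach is exactly the paper's: the corollary is stated there as ``a direct consequence of Theorem~\ref{thm: conv G}'' with no further argument, so your explicit identification of $z=(\bm{h}_\varepsilon,\bm{f}_\varepsilon)$, $y=\bm{g}_\varepsilon$ and the coefficients $A_i,B_i,C_i$ simply spells out what the paper leaves implicit. Your final paragraph on truncating the quadratic flux to recover linear growth is a legitimate technical point that the paper does not address at all, so in that respect your proof is more complete than the original.
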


The three--scale approximation follows the computations in \S \ref{sec: JX 3 app} and in \S \ref{sec: GT 3}. Starting from 
\begin{equation*}
\left\{\;
\begin{aligned}
    & \partial_{t}h + \partial_{x}\left[ hv + \varepsilon \mathcal{F}_{1}(h) \right] = 0,\\
    & \partial_{t}(hv) + \partial_{x}\left(h + \frac{1}{2}h^{2}\right) = \frac{1}{\varepsilon}\left(\frac{1}{2}h^{2}-hv\right),
\end{aligned}
\right.
\end{equation*}
we set \; $\partial_{t}g = \frac{1}{\varepsilon}\left(\frac{1}{2}h^{2} - hv\right),$\,
which yields $\partial_{t}[hv -g] = -\partial_{x}\left(h+\frac{1}{2}h^{2}\right).$
We define $f:= hv - g$ and get the following
\begin{equation*}
\left\{\;
\begin{aligned}
    & \partial_{t} h =- \partial_{x}(f+g + \varepsilon\mathcal{F}_{1}(h)), \\
    & \partial_{t} f = -\partial_{x}\left(h + \frac{1}{2}h^{2}\right),\\
    & \partial_{t} g = -\frac{1}{\varepsilon}\left( g+f -\frac{1}{2}h^{2}\right).
\end{aligned}
\right.
\end{equation*}
Let us introduce $p$ and $q$ such that
\begin{equation*}
\begin{aligned}
    & \partial_{t} h = -\partial_{x} k,\\
    & \partial_{t} k = -\frac{1}{\varepsilon}\big[ k- (f+g+\varepsilon q) \big], \\
    & \partial_{t} p = - \frac{1}{\varepsilon}\big[ k - (f+g) \big],\\
    & \partial_{t} q = -\frac{1}{\varepsilon^{2}}[q - \mathcal{F}_{1}(h)].
\end{aligned}
\end{equation*}
This yields
\begin{equation*}
\left\{\;
\begin{aligned}
    & \partial_{t} h = -\partial_{x}[k-p] - \partial_{x}p,\\
    & \partial_{t} [k-p] =  q,\\
    & \partial_{t} p = - \frac{1}{\varepsilon}\big[ p + [k-p] - (f+g) \big],\\
    & \partial_{t} q = -\frac{1}{\varepsilon^{2}}[q - \mathcal{F}_{1}(h)],
\end{aligned}
\right.
\end{equation*}
to which we add the two equations for $f,g$
\begin{equation*}
    \partial_{t} f = -\partial_{x}\left(h + \frac{1}{2}h^{2}\right), \quad \text{ and } \quad \partial_{t} g = -\frac{1}{\varepsilon}\left( g+f -\frac{1}{2}h^{2}\right).
\end{equation*}
Note here we did not add the terms in Jin--Xin relaxation (i.e. we took $a=b=0$ in \eqref{relax JX 2}). Therefore we got a system analogue to \eqref{without ab}. But we could also consider the additional terms with $a,b>0$, and get a system analogue to \eqref{with ab}.

The discrete version can be expressed as follows. Let $(\bm{h}, \bm{f}, \bm{g}, \bm{k}, \bm{p}, \bm{q})$ be the discretisation of $(h, f, g, [k-p], p, q)$ respectively. Then one gets

\begin{equation}\label{SP WB multi}
\left\{\;
\begin{aligned}
    & \dot{\bm{h}}_{\varepsilon}(s) = - D \bm{k}_{\varepsilon}(s) - D \bm{p}_{\varepsilon}(s) + \mathds{H}(\bm{h}_{\varepsilon}(s))\alpha(s) , &&  \, \bm{h}_{\varepsilon}(0) = \bm{h}_{0},\\
    & \dot{\bm{f}}_{\varepsilon}(s) = - D\bm{h}_{\varepsilon}(s) - \frac{1}{2}D (\bm{h}_{\varepsilon}(s))^{2} , &&  \, \bm{f}_{\varepsilon}(0) = \bm{f}_{0},\\
    & \dot{\bm{k}}_{\varepsilon}(s) = \bm{q}_{\varepsilon}(s), &&  \, \bm{k}_{\varepsilon}(0) = \bm{k}_{0},\\
    & \dot{\bm{g}}_{\varepsilon}(s) =  -\frac{1}{\varepsilon}\left[ \bm{g}_{\varepsilon}(s) + \bm{f}_{\varepsilon}(s) - \frac{1}{2}(\bm{h}_{\varepsilon}(s))^{2} -\mathds{G}(\bm{h}_{\varepsilon}(s))\beta(s) \right], &&  \, \bm{g}_{\varepsilon}(0) = \bm{g}_{0},\\
    & \dot{\bm{p}}_{\varepsilon}(s) = -\frac{1}{\varepsilon}\left[ \bm{p}_{\varepsilon}(s) + \bm{k}_{\varepsilon}(s) - \bm{f}_{\varepsilon}(s) - \bm{g}_{\varepsilon}(s) \right], &&  \, \bm{p}_{\varepsilon}(0) = \bm{p}_{0},\\
    & \dot{\bm{q}}_{\varepsilon}(s) = -\frac{1}{\varepsilon^{2}}\left[\bm{q}_{\varepsilon}(s) - \mathcal{F}_{1}(\bm{h}_{\varepsilon}(s)) - \mathds{K}(\bm{h}_{\varepsilon}(s))\alpha(s)\right], &&  \, \bm{q}_{\varepsilon}(0) = \bm{q}_{0},
\end{aligned}
\right.
\end{equation}
where we have chosen to add three control parameters $\alpha,\beta,\gamma$, although all the ODEs can be controlled as described in section \S \ref{sec: multiscale sys}. At the limit $\varepsilon\to 0$, we obtain \eqref{eq: eff WB}. This is the object of the following result, analogue of Corollary \ref{cor JX 3}.
\begin{cor}
    The value function corresponding to the control of \eqref{SP WB multi} converges locally uniformly on $\big(\mathds{R}^{m}\big)^{6}\times (0,1]$ to the value function corresponding to the control of \eqref{eq: eff WB}.
\end{cor}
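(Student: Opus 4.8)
The plan is to recognise the system \eqref{SP WB multi} as a concrete instance of the abstract three--scale system \eqref{CSP 2} and then invoke Theorem \ref{thm: conv multi}, exactly as was done for Jin--Xin in Corollary \ref{cor JX 3}. First I would fix the scale assignment $z=(\bm{h}_\varepsilon,\bm{f}_\varepsilon,\bm{k}_\varepsilon)$ (slow), $y=(\bm{g}_\varepsilon,\bm{p}_\varepsilon)$ (meso, the $1/\varepsilon$ block) and $x=\bm{q}_\varepsilon$ (micro, the $1/\varepsilon^2$ block), and read off the coefficient blocks of \eqref{CSP 2} directly from \eqref{SP WB multi}:
\[
A_0=\begin{pmatrix}0\\0\\ I\end{pmatrix},\;\; A_1=\begin{pmatrix}0&-D\\0&0\\0&0\end{pmatrix},\;\; B_1=\begin{pmatrix}\mathds{H}\\0\\0\end{pmatrix},\;\; A_2=\begin{pmatrix}-I&0\\ I&-I\end{pmatrix},\;\; B_2=\begin{pmatrix}\mathds{G}\\0\end{pmatrix},
\]
together with $A_3=-I$, $B_3=\mathds{K}$, $C_3=\mathcal{F}_1(\bm{h})$, and with $C_1,C_2$ collecting the remaining slow--dependent terms (in particular the quadratic flux $\tfrac12 D\bm{h}^2$). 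All blocks depend on the state only through the slow variable $\bm{h}$, as the framework requires.

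Next I would check the standing assumptions (SA). Regularity is inherited from the local Lipschitz, linear--growth hypotheses on $\mathds{H},\mathds{G},\mathds{K},\mathcal{F}_1$, the quadratic nonlinearity being harmless after the standard truncation on the compact set where the trajectories live (Remark \ref{rem periodic}); the controls range over compact convex sets. The crucial point is the stability (SA.5) of the two fast blocks: $A_3=-I$ has eigenvalue $-1$, while $A_2$ is block lower--triangular with $-I$ on the diagonal, so writing $A_2=-I+N$ with $N=\bigl(\begin{smallmatrix}0&0\\ I&0\end{smallmatrix}\bigr)$ nilpotent gives $e^{A_2 t}=e^{-t}(I+tN)$, which decays exponentially, so $A_2$ satisfies \eqref{assumption fast} (all its eigenvalues equal $-1$). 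Consequently $A_2,A_3$ are invertible and the static equations at $\varepsilon=0$ have the unique roots $\bm{q}=\mathcal{F}_1(\bm{h})+\mathds{K}\gamma$, then $\bm{g}=-\bm{f}+\tfrac12\bm{h}^2+\mathds{G}\beta$ and $\bm{p}=-\bm{k}+\tfrac12\bm{h}^2+\mathds{G}\beta$.

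I would then run the cascaded limit of Theorem \ref{thm: conv multi}: pass to the limit first in $x=\bm{q}$ (cell constant $\lambda_2$ from \eqref{micro}), then in $y=(\bm{g},\bm{p})$ (cell constant $\lambda_1$ from \eqref{meso}), and finally solve the effective macro HJB \eqref{eff HJB multi}. Substituting the static roots into $\dot{\bm{h}}=-D\bm{k}-D\bm{p}+\mathds{H}\alpha$ cancels the $\pm D\bm{k}$ terms and leaves exactly the controlled Burgers dynamics \eqref{eq: eff WB}; since the cost depends on $\bm{h}$ alone, the limiting dynamics of the auxiliary variables $\bm{f},\bm{k}$ play no role in the value function, and statement (3) of Theorem \ref{thm: conv multi} delivers the asserted local uniform convergence $\mathbf{V}^\varepsilon\to\mathbf{V}_0$ on $(\mathds{R}^m)^6\times(0,1]$.

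The hard part will be the meso layer. Unlike the Jin--Xin \eqref{relax 2 eps} and Goldstein--Taylor \eqref{relax 3 eps} three--scale models, the $1/\varepsilon$ block here couples the two components $(\bm{g},\bm{p})$ while only $\bm{g}$ is actuated, so $B_2=\bigl(\begin{smallmatrix}\mathds{G}\\0\end{smallmatrix}\bigr)$ fails to have full row rank and the instantaneous controllability \eqref{strong controllability} is unavailable in the $\bm{p}$--direction. The point I would make carefully is that Lemma \ref{lem: corrector} and Theorem \ref{thm: conv multi} really need only ergodicity of the frozen fast flow: the uniform stability of the triangular $A_2$ already forces a unique globally attracting equilibrium for each frozen $(z,\beta)$, so the limit occupational measure set collapses to a single Dirac and the cell problem \eqref{meso} is well posed with a unique $\lambda_1$ even though \eqref{strong controllability} holds only on the actuated component. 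Verifying that this reduced (dissipative) structure substitutes for controllability is the only non--routine step.
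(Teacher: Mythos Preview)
Your approach is exactly the one the paper intends: the corollary is stated without proof as ``analogue of Corollary \ref{cor JX 3}'', i.e.\ cast \eqref{SP WB multi} in the form \eqref{CSP 2} and invoke Theorem \ref{thm: conv multi}. Your identification of the blocks and your observation that the static elimination reproduces \eqref{eq: eff WB} are correct.

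You have, however, been more careful than the paper and have put your finger on a genuine discrepancy. In this example the meso layer is two--dimensional, $y=(\bm g_\varepsilon,\bm p_\varepsilon)$, while $\beta$ enters only the $\bm g$--equation; hence $B_2=\bigl(\begin{smallmatrix}\mathds{G}\\0\end{smallmatrix}\bigr)$ fails the full--rank hypothesis (SA.6), so the controllability condition \eqref{strong controllability} is unavailable and Theorem \ref{thm: conv multi} does not apply verbatim. The paper does not address this beyond the parenthetical remark, just before \eqref{SP WB multi}, that ``all the ODEs can be controlled as described in section \S\ref{sec: multiscale sys}''; in other words, its implicit fix is to regard the choice of three controls as illustrative and to add, if needed, a fourth control in the $\bm p$--equation so that (SA.6) is restored. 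With that amendment the corollary follows directly from Theorem \ref{thm: conv multi}.

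Your alternative route---replacing controllability by the dissipative structure of $A_2$---is the more interesting one. It is correct that $A_2$ is lower triangular with spectrum $\{-1\}$, so the frozen fast flow has a unique globally attracting equilibrium and, in Gaitsgory's framework (Theorem \ref{thm: conv G}, \cite{gaitsgory1992suboptimization}), the LOMS collapses to a single Dirac. This does yield convergence of trajectories and of the infimum. The delicate point is that the HJB route in this paper uses controllability twice: in Lemma \ref{lem: corrector} for the cell problem and in Theorem \ref{thm conv} via \cite{homocp}. To make your argument self--contained you would need to argue that the ergodic constant $\lambda_1$ in \eqref{meso} is still well defined under stability alone (it is, via \cite[Proposition 6.2]{alvarez2010ergodicity}, since asymptotic stability is another sufficient hypothesis there), and that the convergence of $V^\varepsilon$ still goes through. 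That is doable, but it is a genuine extension of the paper's framework rather than a direct application of Theorem \ref{thm: conv multi}.
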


\subsection{Second-order traffic flow models}

As stated in \cite[\S 6.2]{pareschi2005implicit}, we recall the model in \cite{aw2000resurrection} for vehicular traffic
\begin{equation*}
\left\{\;
    \begin{aligned}
        & \partial_{t} \rho + \partial_{x}(\rho v) = 0,\\
        & \partial_{t}(\rho \omega) + \partial_{x}(v\rho \omega) = A \frac{\rho}{\varepsilon}\big(V(\rho) - v\big),\\
        & \omega = v - P(\rho),
    \end{aligned}
\right.
\end{equation*}
where $\rho$ is the density of vehicles subject to the first (continuity) equation, and complemented with an additional velocity equation for the mass flux variations due to the road conditions in front of the driver. Here $P(\rho)$ is a given function describing the anticipation of road conditions in front of the drivers and $V(\rho)$ describes the dependence of the velocity with respect to the density for an equilibrium situation. The parameter $\varepsilon$ is the relaxation time and $A>0$ is a positive constant. \\
When the relaxation time goes to zero, the Lighthill-Whitham \cite{whitham1974linear} model is obtained
\begin{equation}\label{eq: LW}
        \partial_{t} \rho + \partial_{x}\big( \rho V(\rho) \big) = 0.
\end{equation}

We introduce a new variable $f$ subject to the PDE $\;\partial_{t} f = A\frac{\rho}{\varepsilon}\big(V(\rho) - v\big),$
and define $g := \rho \omega -f$. Then we obtain the system
\begin{equation}\label{eq: traffic flow SP}
\left\{\;
    \begin{aligned}
        & \partial_{t} \rho + v(\partial_{x}\rho) + \rho (\partial_{x} v) = 0, \\
        & \partial_{t} g + (\partial_{x}v) g + v (\partial_{x} g) + (\partial_{x}v)f + v(\partial_{x}f) = 0,\\
        & \partial_{t} f = A\frac{1}{\varepsilon}\big(\rho V(\rho) + \rho P(\rho) - f - g \big),
    \end{aligned}
\right.
\end{equation}
and $v$ solves the equation $\rho v = f+g - \rho P(\rho)$. 

The model \eqref{eq: traffic flow SP} falls within our setting, and its discretised version is the following system of ODEs to which we added two control parameters $\alpha$ and $\beta$
\begin{equation}\label{TF eps}
\left\{\;
    \begin{aligned}
        \dot{\bm{\rho}}_{\varepsilon}(s) & = -\bm{v}_{\varepsilon}(s) D\bm{\rho}_{\varepsilon}(s) - \bm{\rho}_{\varepsilon}(s) D\bm{v}_{\varepsilon}(s) + \mathds{H}(\bm{\rho}_{\varepsilon}(s))\alpha(s),\\
        \dot{\bm{g}}_{\varepsilon}(s) & = -\big(\bm{f}_{\varepsilon}(s) + \bm{g}_{\varepsilon}(s)\big) D\bm{v}_{\varepsilon}(s) -\bm{v}_{\varepsilon}(s) D\big(\bm{f}_{\varepsilon}(s) +\bm{g}_{\varepsilon}(s)\big), \\
        \dot{\bm{f}}_{\varepsilon}(s) & = -\frac{1}{\varepsilon}A\big[ \bm{f}_{\varepsilon}(s) + \bm{g}_{\varepsilon}(s) - \bm{\rho}_{\varepsilon}(s) V(\bm{\rho}_{\varepsilon}(s)) - \bm{\rho}_{\varepsilon}(s) P(\bm{\rho}_{\varepsilon}(s))-\mathds{G}(\bm{\rho}_{\varepsilon}(s))\beta(s)\big],
    \end{aligned}
\right.
\end{equation}
together with the equation defining $\bm{v}_{\varepsilon}(s)$
\begin{equation}\label{TF v}
    \bm{\rho}_{\varepsilon}(s) \bm{v}_{\varepsilon}(s) = \bm{f}_{\varepsilon}(s) + \bm{g}_{\varepsilon}(s) - \bm{\rho}_{\varepsilon}(s)P(\bm{\rho}_{\varepsilon}(s)). 
\end{equation}
Here the multiplication of two vectors (e.g., in $\bm{\rho}_{\varepsilon}(s) \bm{v}_{\varepsilon}(s)$) is understood component-wise. As before, we denoted by $D$ a discretisation matrix. An example of control could also be $V(\bm{\rho}_{\varepsilon}) = \beta$. In this case, the dynamics for $\bm{f}_{\varepsilon}$ becomes
\begin{equation*}
    \dot{\bm{f}}_{\varepsilon}(s)  = -\frac{1}{\varepsilon}A\big[ \bm{f}_{\varepsilon}(s) + \bm{g}_{\varepsilon}(s) - \bm{\rho}_{\varepsilon}(s) \beta(s) - \bm{\rho}_{\varepsilon}(s) P(\bm{\rho}_{\varepsilon}(s))\big].
\end{equation*}
When $\varepsilon\to 0$, the local equilibrium is
\begin{equation*}
    \bm{f}(s) + \bm{g}(s) = \bm{\rho}(s) \big[V(\bm{\rho}(s)) +  P(\bm{\rho}(s))\big]+\mathds{G}(\bm{\rho}(s))\beta(s).
\end{equation*}
When substituted in \eqref{TF v}, one gets
\begin{equation*}
\begin{aligned}
    \bm{\rho}(s) \bm{v}(s) = \bm{\rho}(s) V(\bm{\rho}(s)) +\mathds{G}(\bm{\rho}(s))\beta(s).
\end{aligned}
\end{equation*}
We can choose the matrix $\mathds{G}(\bm{\rho}(s))$ to be zero matrix when $\bm{\rho}(s) =0$. Then, we can write, whenever $\bm{\rho}(s) \neq 0$
\begin{equation*}
\begin{aligned}
    \bm{v}(s) =  V(\bm{\rho}(s)) +(\bm{\rho}(s))^{-1}\mathds{G}(\bm{\rho}(s))\beta(s),
\end{aligned}
\end{equation*}
the inverse $(\bm{\rho}(s))^{-1}$ being understood component-wise. Therefore, the first equation in \eqref{TF eps} becomes
\begin{equation}\label{eq: rho TF}
\begin{aligned}
    \dot{\bm{\rho}}(s) = & -V(\bm{\rho}(s)) D\bm{\rho}(s) - \bm{\rho}(s) DV(\bm{\rho}(s)), \\
    & \quad  -\left[(\bm{\rho}(s))^{-1}\mathds{G}(\bm{\rho}(s))\beta(s)\right] D\bm{\rho}(s) - \bm{\rho}(s) D\left[(\bm{\rho}(s))^{-1}\mathds{G}(\bm{\rho}(s))\beta(s)\right],\\
    & \quad \quad \quad \quad \quad + \mathds{H}(\bm{\rho}(s))\alpha(s).
\end{aligned}
\end{equation}
This corresponds to the discretisation of the controlled PDE
\begin{equation*}
    \partial_{t}\rho + \partial_{x} \big[\rho(t,x)V(\rho(t,x)) + \mathds{G}(\rho(t,x))\beta(t,x)\big] = \mathds{H}(\rho(s))\alpha(t,x),
\end{equation*}
which reduces to \eqref{eq: LW} in the absence of controls.

Let us introduce the two optimal control problems
\begin{equation}\label{eq: value eps TF}
\tag{SP.6}
    \inf \; \Phi(\bm{\rho}_{\varepsilon}(T)),\quad \text{s.t. } \; \eqref{TF eps}-\eqref{TF v}\,,
\end{equation}
and
\begin{equation}\label{eq: value eff TF}
\tag{R.6}
    \inf \; \Phi(\bm{\rho}_{\varepsilon}(T)),\quad \text{s.t. } \; \eqref{eq: rho TF}\,.
\end{equation}
The following corollary is a direct consequence of Theorem \ref{thm: conv G}.

\begin{cor}
    As $\varepsilon\to 0$, the problem \eqref{eq: value eps TF} is approximated by \eqref{eq: value eff TF} in the sense of Definition \ref{def:approx}. In particular, the controlled system \eqref{TF eps}-\eqref{TF v} with any given initial conditions converges to \eqref{eq: rho TF} locally uniformly on any finite time interval, and we have $\inf\, \Phi(\bm{\rho}_{\varepsilon}(T)) \to \inf \, \Phi(\bm{\rho}(T))$. 
\end{cor}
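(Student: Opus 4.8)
The plan is to place the controlled traffic system in the singularly perturbed form of Section~\ref{sec: linear} and then invoke Theorem~\ref{thm: conv G} verbatim. Concretely, I would designate $z=(\bm{\rho}_{\varepsilon},\bm{g}_{\varepsilon})$ as the slow variable and $y=\bm{f}_{\varepsilon}$ as the fast one, so that \eqref{TF eps}--\eqref{TF v} matches the structure \eqref{CSP}. The one point needing care here is that $\bm{v}_{\varepsilon}$ is not an independent state but is fixed by the algebraic relation \eqref{TF v}; solving it component-wise on the region $\{\bm{\rho}_{\varepsilon}\neq 0\}$ gives $\bm{v}_{\varepsilon}=(\bm{\rho}_{\varepsilon})^{-1}(\bm{f}_{\varepsilon}+\bm{g}_{\varepsilon}-\bm{\rho}_{\varepsilon}P(\bm{\rho}_{\varepsilon}))$, which I would substitute into the first two equations so that the slow dynamics depends on $(z,y)$ alone. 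After this substitution the fast equation reads $\varepsilon\dot{\bm{f}}_{\varepsilon}=-A[\bm{f}_{\varepsilon}+\bm{g}_{\varepsilon}-\bm{\rho}_{\varepsilon}V(\bm{\rho}_{\varepsilon})-\bm{\rho}_{\varepsilon}P(\bm{\rho}_{\varepsilon})-\mathds{G}(\bm{\rho}_{\varepsilon})\beta]$, so the matrix multiplying the fast variable is $A_{2}=-A\,\mathrm{Id}$.

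Next I would verify the standing assumptions. The stability condition \eqref{assumption fast} holds with $\lambda=A>0$, since $A_{2}=-A\,\mathrm{Id}$ has the single eigenvalue $-A<0$ uniformly in $z$ and $\|e^{A_{2}t}\|=e^{-At}$; this also activates Remark~\ref{rem periodic}, so every trajectory issued from a fixed compact set of initial data remains in a compact set $K$, which I would take bounded away from $\{\bm{\rho}=0\}$ so that $(\bm{\rho}_{\varepsilon})^{-1}$ and hence all coefficients are locally Lipschitz on $K$. Controllability of the fast subsystem in the sense of \eqref{strong controllability} follows from $\mathds{G}(\bm{\rho})$ having full rank together with $\Omega_{B}$ being a sufficiently large compact convex set, exactly as in the general discussion; the sets $\Omega_{A},\Omega_{B}$ are compact and convex by hypothesis, and $V,P,\mathds{H},\mathds{G}$ are taken regular enough to meet the Lipschitz and growth requirements of (A).

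I would then identify the reduced problem. Setting $\varepsilon=0$ forces the bracket in the $\bm{f}$-equation to vanish, i.e. $\bm{f}+\bm{g}=\bm{\rho}[V(\bm{\rho})+P(\bm{\rho})]+\mathds{G}(\bm{\rho})\beta$; notice that the slow variable $\bm{g}$ cancels from this combination, so inserting it into \eqref{TF v} gives $\bm{v}=V(\bm{\rho})+(\bm{\rho})^{-1}\mathds{G}(\bm{\rho})\beta$ independently of $\bm{g}$, and substituting into the $\bm{\rho}$-equation yields precisely \eqref{eq: rho TF} with $\bm{g}$ eliminated. Since the cost in \eqref{eq: value eps TF} depends only on $\bm{\rho}_{\varepsilon}(T)$, the hypotheses of Theorem~\ref{thm: conv G} are met, and the theorem directly gives that \eqref{eq: value eps TF} is approximated by \eqref{eq: value eff TF} in the sense of Definition~\ref{def:approx}, together with the locally uniform convergence of trajectories and of the infima $\inf\Phi(\bm{\rho}_{\varepsilon}(T))\to\inf\Phi(\bm{\rho}(T))$. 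A final time rescaling from $[0,T]$ to $[0,1]$, as in the Mayer reformulation of Section~\ref{sec: hjb}, matches the normalised horizon used by the theorem.

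The main obstacle is the coupling through $\bm{v}_{\varepsilon}$. Unlike the Jin--Xin and Goldstein--Taylor examples, eliminating $\bm{v}_{\varepsilon}$ introduces the component-wise inverse $(\bm{\rho}_{\varepsilon})^{-1}$, which is singular at zero density, and renders the $\bm{g}$-equation quadratic rather than affine in the fast variable $\bm{f}_{\varepsilon}$. I would therefore stress that the convergence is obtained from the general nonlinear singular-perturbation result (Theorem~4.3 and Example~4.3 of \cite{gaitsgory1992suboptimization}) underlying Theorem~\ref{thm: conv G}, rather than from the strictly affine instance \eqref{CSP}; the a priori compactness supplied by the exponential stability of the fast flow is what keeps the dynamics inside a region $\{\bm{\rho}\geq c>0\}$ where all coefficients stay Lipschitz and the hypotheses are legitimately verified.
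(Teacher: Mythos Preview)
Your approach is correct and matches the paper's: both simply invoke Theorem~\ref{thm: conv G}, and indeed the paper's entire proof is the one-line remark preceding the corollary. You actually go further than the paper by flagging that, after eliminating $\bm{v}_{\varepsilon}$ via \eqref{TF v}, the $\bm{g}$-equation becomes quadratic in the fast variable $\bm{f}_{\varepsilon}$ and the coefficients involve $(\bm{\rho}_{\varepsilon})^{-1}$, so the system is not literally of the affine form \eqref{CSP}; your appeal to the general nonlinear singular-perturbation result underlying Theorem~\ref{thm: conv G} (Theorem~4.3 and Example~4.3 of \cite{gaitsgory1992suboptimization}) and your restriction to a compact set bounded away from $\{\bm{\rho}=0\}$ are appropriate ways to close this gap, which the paper leaves implicit.
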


\subsection{Granular Gases}

We shall consider the model in \cite[\S 6.3]{pareschi2005implicit} for a granular gas \cites{jenkins1985grad, toscani2004kinetic} given by
\begin{equation*}
\left\{ \;
    \begin{aligned}
        & \partial_{t}\rho + \partial_{x}(\rho u) = 0, \\
        & \partial_{t}(\rho u) + \partial_{x}(\rho u^2 + p) = \rho g, \\
        & \partial_{t}\left( \frac{1}{2}\rho u^2 + \frac{3}{2}\rho T \right) + \partial_{x}\left( \frac{1}{2}\rho u^3 + \frac{3}{2} u\rho T + pu \right) = - \frac{(1-e^2)}{\varepsilon} G(\rho) \rho^{2} T^{3/2},
    \end{aligned}
\right.
\end{equation*}
where $e$ is the coefficient of restitution, $g$ the acceleration due to gravity, $\varepsilon$ a relaxation time, $p$ is the pressure given by $p = \rho T (1+2(1+e)G(\rho)),$
and $G(\rho)$ is the statistical correlation function. 

We introduce new variables
\begin{equation}\label{eq: v w}
    v=\rho u, \quad \text{ and } \quad \text{w} =  \frac{1}{2}\rho u^2 + \frac{3}{2}\rho T,
\end{equation}
and define $\varphi$ such that $\partial_{t}\varphi = - \frac{(1-e^2)}{\varepsilon} G(\rho) \rho^{2} T^{3/2}.$ 
Then, setting
\begin{equation}\label{eq: omega}
    \omega = \text{w} - \varphi,
\end{equation}
yields the system 
\begin{equation*}
\left\{ \;
    \begin{aligned}
        & \partial_{t}\rho + \partial_{x}v = 0,\\
        & \partial_{t} v + \partial_{x}(uv + p) = \rho g,\\
        & \partial_{t} \omega + \partial_{x}(u \omega + up +u\varphi) = 0,\\
        & \partial_{t}\varphi = - \frac{(1-e^2)}{\varepsilon} G(\rho) \rho^{2} T^{3/2}.
    \end{aligned}
\right.
\end{equation*}
From the definition of $\omega$ and $\text{w}$ above, we can write $\rho T = \frac{2}{3}(\omega + \varphi) - \frac{1}{3}v u$, then we substitute the latter quantity in $\rho^{2}T^{3/2} = (\rho T)\rho T^{1/2}$ and obtain the system
\begin{equation*}
\left\{ \;
    \begin{aligned}
        & \partial_{t}\rho + \partial_{x}v = 0,\\
        & \partial_{t} v + \partial_{x}(uv + p) = \rho g,\\
        & \partial_{t} \omega + \partial_{x}(u \omega + up +u\varphi) = 0,\\
        & \partial_{t}\varphi = \frac{1}{\varepsilon}\,\frac{2(1-e^2)}{3} G(\rho) \rho T^{1/2}\left( -\omega - \varphi + \frac{1}{2}vu\right),
    \end{aligned}
\right.
\end{equation*}
whose structure now complies with our singularly perturbed model. 

Indeed, the discrete and controlled version of the latter system of PDEs is the following system of ODEs where the control parameters are $\alpha,\beta,\gamma$
\begin{equation}\label{eq: granular relax}
\left\{\,
\begin{aligned}
    \dot{\bm{\rho}}_{\varepsilon}(s) & = -D \bm{v}_{\varepsilon}(s) + \mathds{H}(\bm{\rho}_{\varepsilon}(s))\alpha(s), && \bm{\rho}_{\varepsilon}(0) = \bm{\rho}_{0},\\
    \dot{\bm{v}}_{\varepsilon}(s) & = \bm{\rho}_{\varepsilon}(s)g - \bm{u}_{\varepsilon}(s)D \bm{v}_{\varepsilon}(s) - \bm{v}_{\varepsilon}(s)D\bm{u}_{\varepsilon}(s) - D \bm{p}_{\varepsilon}(s)+ \mathds{K}(\bm{\rho}_{\varepsilon}(s))\gamma(s),&& \bm{v}_{\varepsilon}(0) = \bm{v}_{0},\\
    \dot{\bm{w}}_{\varepsilon}(s) & = -\bm{u}_{\varepsilon}(s) D\big[\bm{w}_{\varepsilon}(s) + \bm{p}_{\varepsilon}(s) + \bm{\varphi}_{\varepsilon}(s)\big]  && \bm{w}_{\varepsilon}(0) = \bm{w}_{0},\\
    & \quad \quad \quad - \big[\bm{w}_{\varepsilon}(s) + \bm{p}_{\varepsilon}(s) + \bm{\varphi}_{\varepsilon}(s)\big] D \bm{u}_{\varepsilon}(s), && \\
    \dot{\bm{\varphi}}_{\varepsilon}(s) & = -\frac{1}{\varepsilon} M(\bm{\rho}_{\varepsilon}(s))\left[ \bm{\varphi}_{\varepsilon}(s) + \bm{w}_{\varepsilon}(s) - \frac{1}{2}\bm{u}_{\varepsilon}(s)\bm{v}_{\varepsilon}(s) - \mathds{G}(\bm{\rho}_{\varepsilon}(s))\beta(s)\right] , && \bm{\varphi}_{\varepsilon}(0) = \bm{\varphi}_{0},
\end{aligned}
\right.
\end{equation}
where $M(\bm{\rho}_{\varepsilon}(s)):=\frac{2(1-e^2)}{3} G(\bm{\rho}_{\varepsilon}(s) )\bm{\rho}_{\varepsilon}(s) T^{1/2}$. 

When $\varepsilon\to 0$, we obtain 
\begin{equation*}
    \bm{\varphi}(s) + \bm{w}(s) = \frac{1}{2}\bm{u}(s)\bm{v}(s) + \mathds{G}(\bm{\rho}(s))\beta(s).
\end{equation*}
In particular since $\dot{\bm{\varphi}}(s) = 0$ and recalling \eqref{eq: omega}, we get
\begin{equation*}
    \dot{\textbf{w}}(s) = \dot{\bm{w}}(s) + \dot{\bm{\varphi}}(s) = \dot{\bm{w}}(s). 
\end{equation*}
Thus, from \eqref{eq: v w}, we have
\begin{equation}\label{eq: granular 1}
\begin{aligned}
    \frac{\text{d}}{\text{d}t} \left[\frac{1}{2} \bm{\rho}(s)\bm{u}(s)^{2} + \frac{3}{2}\bm{\rho}(s)T\right] 
    & = -\bm{u}(s) D\big[\frac{1}{2}\bm{u}(s)\bm{v}(s) + \mathds{G}(\bm{\rho}(s))\beta(s) + \bm{p}(s)\big]  \\
    & \quad \quad \quad - \big[\frac{1}{2}\bm{u}(s)\bm{v}(s) + \mathds{G}(\bm{\rho}(s))\beta(s) + \bm{p}(s)\big] D \bm{u}(s), 
\end{aligned}
\end{equation}
which complements
\begin{equation}\label{eq: granular 2}
\left\{\,
\begin{aligned}
    \dot{\bm{\rho}}(s) & = -D \bm{v}(s) + \mathds{H}(\bm{\rho}(s))\alpha(s), \\
    \dot{\bm{v}}(s) & = \bm{\rho}(s)g - \bm{u}(s)D \bm{v}(s) - \bm{v}(s)D\bm{u}(s) - D \bm{p}(s) + \mathds{K}(\bm{\rho}(s))\gamma(s). 
\end{aligned}
\right.
\end{equation}
Recalling again \eqref{eq: v w}, the latter system of ODEs corresponds to the discretisation of the system of PDEs
\begin{equation*}
\left\{\,
\begin{aligned}
    & \partial_{t} \rho + \partial_{x}\big[ \rho u \big] = \mathds{H}(\rho)\alpha,\\
    & \partial_{t}\big(\rho u\big) + \partial_{x}\big(u^{2}\rho + p\big) = \rho g + \mathds{K}(\rho)\gamma,\\
    & \partial_{t}\left(\frac{1}{2}\rho u^{2} + \frac{3}{2}\rho T\right) +\partial_{x}\left[ \frac{1}{2}u^{3}\rho  + up + u\,\mathds{G}(\rho)\beta\right] = 0.
\end{aligned}
\right.
\end{equation*}

Let us introduce the two optimal control problems
\begin{equation}\label{eq: value eps GG}
\tag{SP.6}
    \inf \; \Phi(\bm{\rho}_{\varepsilon}(T)),\quad \text{s.t. } \; \eqref{eq: granular relax}\,,
\end{equation}
and
\begin{equation}\label{eq: value eff GG}
\tag{R.6}
    \inf \; \Phi(\bm{\rho}_{\varepsilon}(T)),\quad \text{s.t. } \; \eqref{eq: granular 2}-\eqref{eq: granular 1}\,.
\end{equation}
The following corollary is a direct consequence of Theorem \ref{thm: conv G}.

\begin{cor}
    As $\varepsilon\to 0$, the problem \eqref{eq: value eps GG} is approximated by \eqref{eq: value eff GG} in the sense of Definition \ref{def:approx}. In particular, the controlled system \eqref{eq: granular relax} with any given initial conditions converges to \eqref{eq: granular 2}-\eqref{eq: granular 1} locally uniformly on any finite time interval, and we have $\inf\, \Phi(\bm{\rho}_{\varepsilon}(T)) \to \inf \, \Phi(\bm{\rho}(T))$. 
\end{cor}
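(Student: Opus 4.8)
The plan is to recognise that the controlled granular-gas system \eqref{eq: granular relax} is, after grouping the variables, an instance of the affine singularly perturbed system \eqref{CSP}, and then to apply Theorem \ref{thm: conv G} verbatim, exactly as was done for Corollaries \ref{cor: JX} and \ref{cor: GT}. First I would designate the slow variable $z := (\bm{\rho}_{\varepsilon}, \bm{v}_{\varepsilon}, \bm{w}_{\varepsilon})$ and the fast variable $y := \bm{\varphi}_{\varepsilon}$, with the controls $(\alpha,\gamma)$ acting on the slow block and $\beta$ on the fast block. Writing $\bm{u}=\bm{v}/\bm{\rho}$ and recalling that the pressure and the factor $M(\bm{\rho})$ are expressed through the slow state, the last equation of \eqref{eq: granular relax} rearranges into $\varepsilon\dot{y}=A_{2}(z)y+B_{2}(z)\beta+C_{2}(z)$ with
\[
A_{2}(z)=-\mathrm{diag}(M(\bm{\rho})),\quad B_{2}(z)=\mathrm{diag}(M(\bm{\rho}))\,\mathds{G}(\bm{\rho}),\quad C_{2}(z)=\mathrm{diag}(M(\bm{\rho}))\Big(\tfrac{1}{2}\bm{u}\bm{v}-\bm{w}\Big),
\]
while the first three equations assemble into $\dot{z}=A_{1}(z)y+B_{1}(z)\alpha+C_{1}(z)$, the coupling to $y=\bm{\varphi}$ entering linearly through the pressure term $D\bm{p}$ and the flux $u\varphi$, both of which are affine in $\varphi$. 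The control matrices $B_{1},B_{2}$ collect $\mathds{H},\mathds{K}$ and $\mathds{G}$, and $C_{1}$ gathers the remaining drift.

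Next I would verify the assumptions (A). The affine-in-control structure is immediate from the identification above. For the stability condition \eqref{assumption fast} of \textbf{(a.6)}, note that in the physical regime $0\le e<1$ together with $G(\bm{\rho})>0$, $\bm{\rho}>0$ and $T>0$ one has $M(\bm{\rho})>0$, so $A_{2}(z)=-\mathrm{diag}(M(\bm{\rho}))$ is negative definite and $\|e^{A_{2}(z)t}\|\le e^{-\lambda t}$ holds with $\lambda=\inf M>0$, locally uniformly on any compact set of slow states on which $\bm{\rho}$ and $T$ are bounded away from zero. The full-rank requirement (SA.6) for $B_{2}(z)=\mathrm{diag}(M(\bm{\rho}))\mathds{G}(\bm{\rho})$ follows from $M>0$ and the full rank of $\mathds{G}$, which in turn secures the controllability \eqref{strong controllability}; compactness and convexity of $\Omega_{A},\Omega_{B},\Omega_{\Gamma}$ are assumed. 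With (A) in force, Theorem \ref{thm: conv G} yields that \eqref{eq: value eps GG} is approximated by \eqref{eq: value eff GG} in the sense of Definition \ref{def:approx}, that the trajectories of \eqref{eq: granular relax} converge locally uniformly to those of \eqref{eq: granular 2}--\eqref{eq: granular 1}, and that the optimal values converge.

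The hard part will be the regularity bookkeeping forced by the nonlinearities, which must be confined to an admissible region rather than all of $\mathds{R}^{m}$. The velocity $\bm{u}=\bm{v}/\bm{\rho}$, the products $\bm{u}\bm{v}=\bm{v}^{2}/\bm{\rho}$, the pressure $\bm{p}$, the square root $T^{1/2}$ hidden in $M(\bm{\rho})$, and the correlation function $G$ are only locally Lipschitz and degenerate as $\bm{\rho}\to 0$ or $T\to 0$; moreover $T$ is itself expressed through the conserved variables, so one must check that the adopted closure keeps $A_{2},B_{2},C_{2}$ genuine functions of $z$ alone. I would handle this by restricting to a compact, forward-invariant set of states on which density and temperature remain in $[\delta,\Delta]$ for some $0<\delta<\Delta$; there the coefficients $A_{i}(\cdot),B_{i}(\cdot),C_{i}(\cdot)$ are locally Lipschitz with the linear growth demanded by (SA.4), the uniform bound $\lambda=\inf M>0$ holds, and Theorem \ref{thm: conv G} applies unchanged. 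Establishing that the controlled dynamics indeed remains in such a region, so that the local bounds are effective, is the only genuinely model-specific point; once this invariance is secured the conclusion is a direct transcription of Theorem \ref{thm: conv G}.
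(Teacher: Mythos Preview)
Your proposal is correct and follows exactly the route the paper takes: the paper does not give a written proof but merely states that the corollary ``is a direct consequence of Theorem~\ref{thm: conv G}'', so your identification of $z=(\bm{\rho}_{\varepsilon},\bm{v}_{\varepsilon},\bm{w}_{\varepsilon})$, $y=\bm{\varphi}_{\varepsilon}$ and the subsequent appeal to Theorem~\ref{thm: conv G} is precisely the intended argument. Your additional bookkeeping on the regularity issues (positivity of $\bm{\rho}$ and $T$, the dependence of $T$ and hence of $M$ and $\bm{p}$ on the full state, and the need for a forward-invariant region) goes well beyond what the paper supplies and correctly flags the only model-specific point that would need checking.
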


\bibliographystyle{amsplain}
\bibliography{bibliography}

\end{document}